\documentclass[a4paper,11pt]{article}



\usepackage[utf8]{inputenc}
\usepackage[english]{babel} 
\usepackage{latexsym}
\usepackage[authoryear]{natbib}
\usepackage{float}
\usepackage{graphicx}
\usepackage{amsmath,amsthm,amssymb,mathtools, mathrsfs} 
\usepackage{enumitem}
\usepackage{layout} 
\usepackage{csquotes}
\usepackage[left=2.7cm,top=4cm,right=2.7cm,bottom=4cm]{geometry} 
\usepackage{framed}
\usepackage[labelfont=sc]{caption}
\usepackage{changepage} 
\usepackage{xcolor}
\usepackage{booktabs,tabularx} 
\usepackage{titling}
\usepackage[capitalize,nameinlink]{cleveref} 


\newcommand{\AVT}{\textnormal{AVAR}(\hat{\theta}_n)}
\newcommand{\AVE}{\widehat{\textnormal{AVAR}}(\hat{\theta}_n)}


\newcommand{\abs}[1]{\left\lvert #1 \right\rvert}
\newcommand{\cc}{\cdot}
\newcommand{\ccs}{\hspace*{0.1cm} \cc \hspace*{0.1cm}}
\newcommand{\dd}{\hspace*{0.05cm}\mathrm{d}}
\newcommand{\given}{\hspace*{0.1cm} \middle\vert \hspace*{0.1cm}}
\newcommand{\norm}[1]{\left\lVert #1 \right\rVert}



\newcommand{\myitem}{\item[$\cc$]}


\newcommand{\R}{\mathbb{R}}

\newcommand{\A}{\mathcal{A}}

\newcommand{\C}{\mathcal{C}}
\newcommand{\D}{\mathcal{D}}
\newcommand{\F}{\mathcal{F}}

\newcommand{\M}{\mathcal{M}}
\newcommand{\N}{\mathcal{N}}
\newcommand{\V}{\mathcal{V}}

\newcommand{\U}{\mathnormal{U}}


\newcommand{\Dnull}{\mathscr{D}_0}
\newcommand{\Enull}{\mathbb{E}_0}
\newcommand{\Gnull}{\mathcal{L}_0}
\newcommand{\Pnull}{\mathbb{P}_0}
\newcommand{\Vnull}{\mathbb{V}{\rm ar}_0}

\newcommand{\cPnull}{\xrightarrow{\Pnull}}
\newcommand{\cDnull}{\xrightarrow{\Dnull}}


\newcommand{\ET}{\mathbb{E}_\theta}
\newcommand{\GT}{\mathcal{L}_\theta}
\newcommand{\PT}{\mathbb{P}_\theta}
\newcommand{\VT}{\mathbb{V}{\rm ar}_\theta}

\newcommand{\FSH}{\mathscr{H}^2_\theta}
\newcommand{\FSL}{\mathscr{L}^2(\mu_\theta)}
\newcommand{\FSLnull}{\mathscr{L}^2_0(\mu_\theta)}

\newcommand{\PTH}{\mathbb{P}_{\hat{\theta}_n}}
\newcommand{\PTG}{\tilde{\mathbb{P}}_{\theta,\gamma}}
\newcommand{\ETG}{\tilde{\mathbb{E}}_{\theta,\gamma}}


\newcommand{\vtwo}[2]{\left(\begin{array}{c} #1 \\ #2 \end{array}\right)}

\newcommand{\mtwo}[4]{\left(\begin{array}{cc} #1 & #2 \\ #3 & #4 \end{array}\right)}

\title{Prediction-based estimation for diffusion models with high-frequency data}


\date{}




\begin{document}
	\maketitle

\vspace{-28mm}

\begin{center}\Large{Emil S. J\o rgensen\footnotemark[1]
      \footnotemark[2] \hspace{10mm} 
    Michael S\o rensen\footnotemark[1] \footnotemark[3]}\end{center} 

\footnotetext[1]{Deptartment of Mathematical Sciences, University of
 Copenhagen, Universitetsparken 5, DK-2100 Copenhagen {\O}, Denmark}

\footnotetext[2]{IKEA Group, {\"A}lmhultsgatan 2, 215 86 Malm{\"o},
  Sweden} 

\footnotetext[3]{Corresponding author, michael@math.ku.dk}

\vspace{8mm}

	\begin{adjustwidth}{1.6cm}{1.6cm}

\footnotesize{

\textsc{Abstract.}
This paper obtains asymptotic results for parametric inference using predic\-tion-based estimating functions when the data are high frequency observations of a diffusion process with an infinite time horizon. Specifically, the data are observations of a diffusion process at $n$ equidistant time points $\Delta_n i$, and the asymptotic scenario is $\Delta_n \to 0$ and $n\Delta_n \to \infty$.  For a useful and tractable  classes of prediction-based estimating functions, existence of a consistent estimator is proved under standard weak regularity conditions on the diffusion process and the estimating function. Asymptotic normality of the estimator is established under the additional rate condition $n\Delta_n^3 \to 0$. The prediction-based estimating functions are approximate martingale estimating functions to a smaller order than what has previously been studied, and new non-standard asymptotic theory is needed.
A Monte Carlo method for calculating the asymptotic variance of the estimators is proposed. \\

\noindent
\textbf{Keywords:} \textit{Diffusion process, high-frequency data, infinitesimal generator, potential operator, parametric inference, prediction-based estimating function, $\rho$-mixing.}

}

\end{adjustwidth}

	\section{Introduction}\label{sec:introduction}

Diffusion processes are often used to model stochastic dynamical systems. An especially successful application area is finance. These processes are defined in continuous time, but for most applications the system is only observed at discrete time points, so statistical methods for discretely observed diffusion processes is a very active area of research. In particular, the availability of high-frequency data has generated considerable interest in the asymptotic behaviour of estimators and test statistics as the time between consecutive observations tends to zero.

In this paper, we study parametric inference for diffusion models that satisfy a stochastic differential equation of the form
\begin{equation}\label{eq:X}
dX_t = a(X_t;\theta)dt + b(X_t;\theta)dB_t,
\end{equation}
where $(B_t)$ is standard Brownian motion, and the known functions $a$ and $b$ depend on a statistical parameter $\theta \in \Theta \subseteq \R^d$ to be estimated. We suppose that $(X_t)$ takes values in an open interval $(l,r) \subseteq \R$ and has a invariant distribution $\mu_\theta$. Moreover, $(X_t)$ is assumed to be stationary under the probability measure $\PT$, i.e.\ $X_0 \sim \mu_\theta$. Let the data be a single discretisation 
\begin{equation*}
X_0,X_{t^n_1},\ldots,X_{t^n_n},
\end{equation*}
where the observation times are deterministic and equidistant, i.e. $t^n_i=i\Delta_n$ for some $\Delta_n>0$. To enable consistent estimation of both drift and diffusion parameters, we consider the ergodic high-frequency sampling scenario
\begin{equation}\label{eq:HF}
n \to \infty, \hspace*{0.5cm} \Delta_n \to 0, \hspace*{0.5cm} n \cc \Delta_n \to \infty,
\end{equation}
where the time horizon $T_n=n\Delta_n$ tends to infinity with the number of observations.

Estimators are defined and studied within the framework of the prediction-based estimating functions, proposed by \cite{pbef-2000, pbef-2011} as a versatile estimation framework, not least for non-Markovian diffusion-type models. They generalize the martingale estimating functions introduced by \cite{mef-1995}. We show that the estimating functions considered in this paper are not approximate martingale estimating functions as defined in \cite{eed-2017}. However, for a two-dimensional predictor space, they are approximate martingale estimating functions of a smaller order than what has previously been studied, namely of order $\Delta_n$ rather than $\Delta_n^k$ for $k \geq 2$. We can still prove existence of consistent estimators, and by applying non-standard limit theory we establish asymptotically normality under mild regularity conditions and the additional rate assumption $n\Delta_n^3 \to 0$. 

Examples from finance and simulation studies as well as more details on the theory and implementation issues can be found in \cite{phd}.

Parametric estimation for discretely observed diffusion processes has been investigated in many papers in the econometrics and statistics literature. Since exact maximum likelihood estimation is untractable for most diffusion models used in practice, a wide range of alternative methods have been proposed and applied successfully. The Markov property of diffusions enables many types of quasi-likelihood, including contrast functions (\cite{asd}, \cite{edp}, \cite{edc-1993}, \cite{mmp}, \cite{eed-1997}), estimating functions (\cite{mef-1995}, \cite{mef-1999}, \cite{sef}, \cite{sm}, \cite{ehf}), likelihood expansions (\cite{edc-1986}, \cite{mle}, \cite{cde}), Markov-chain Monte Carlo (\cite{lid}, \cite{dmf}, \cite{imh}) and simulated likelihood (\cite{eel}, \cite{mce}, \cite{bladtfinchsorensen}).

There is also a well developed literature on nonparametric estimation of the drift and diffusion coefficients from discrete time data. The problem was studied by \cite{npd}, \cite{smd}, \cite{aed}, \cite{esd} and \cite{ped} under the assumption of strict stationarity. Estimation for nonstationary, recurrent diffusion processes was considered by \cite{ned}. Estimation of the diffusion coefficient with high-frequency observations on a finite time horizon was investigated by \cite{ewm}, \cite{edo}, \cite{aed,lpe}, \cite{ked} and \cite{nev}. \cite{nme} gives an excellent survey of nonparametric estimation with an extensive list of references.

The structure of the paper is as follows. In Section \ref{sec:preliminaries} we present the general notation used in the paper, define a tractable class of prediction-based estimating functions, and formulate our general assumption on $(X_t)$. Section \ref{sec:limits} is devoted to limit theorems for functionals $V_n(f) = n^{-1} \sum_{i=1}^n f(X_{t^n_{i-1}})$ and, in particular, a central limit theorem for $f$ belonging to a large class of functions. The variance of the gaussian limit law involves the potential of $f$, which is considered in some detail. Asymptotic results are provided in Section \ref{sec:asymptotics}. In Section \ref{sec:avar} we propose Monte Carlo methods for determining the asymptotic variances obtained in Section \ref{sec:asymptotics}. All proofs are deferred to Appendix~A, and Appendix~B contains some auxiliary results needed in the proofs.

	\section{Preliminaries}\label{sec:preliminaries}

In this section we introduce the notation used throughout the paper, define a tractable class of prediction-based estimating functions, recall some core notions from probability theory, and formulate our main assumptions on the diffusion model $(X_t)$ and the parameter space $\Theta$ for the asymptotic theory.

\subsection{Notation}

Our general notation is as follows:

\begin{enumerate}
\item The parameter of interest $\theta \in \Theta \subseteq \R^d$ for $d \geq 1$. We denote the true parameter by $\theta_0$.
\item We denote the state space of $X$ by $(S,\mathscr{B}(S))$ and assume throughout that $S$ is an open interval in $\R$, i.e. $S=(l,r)$ for $-\infty \leq l < r \leq \infty$, endowed with its Borel $\sigma$-algebra $\mathscr{B}(S)$.
\item The invariant distribution is denoted by $\mu_\theta$. For short, we write $\mu_\theta(f)=\int_S f(x) \mu_\theta(dx)$ for functions $f:S \to \R$, and we denote the canonical norm on $\FSL$ defined by $\norm{f}_2 = \mu_\theta(f^2) ^{1/2}$.
\item For random variables $Y$ and $Z$ defined on a probability space $(\Omega,\F,\mathbb{P})$, we write $Y \leq_C Z$ if there exists a constant $C>0$ such that $Y \leq C \cc Z$, $\mathbb{P}_{\theta_0}$-almost surely. We sometimes use a similar notation for real functions.
\end{enumerate}

To define some function spaces of interest, we say that $f:S \times \Theta \to \R$ is of \emph{polynomial growth in $x$} if for every $\theta \in \Theta$ there exists a constant $C_\theta>0$ such that, $|f(x;\theta)| \leq C_\theta (1 + |x|^{C_\theta})$ for $x \in S$.

\begin{enumerate}[resume]
\item We denote by $\C^{j,k}_p(S \times \Theta)$, $j,k \geq 0$, the class of real-valued functions $f(x;\theta)$ satisfying that
\begin{itemize}
\myitem $f$ is $j$ times continuously differentiable w.r.t. $x$;
\myitem $f$ is $k$ times continuously differentiable w.r.t. $\theta_1,\ldots,\theta_d$;
\myitem $f$ and all partial derivatives $\partial_x^{j_1}\partial_{\theta_1}^{k_1} \cdots \partial_{\theta_d}^{k_d}f$, $j_1 \leq j$, $k_1+\cdots+k_d \leq k$, are of polynomial growth in $x$.
\end{itemize}
We define $\C^j_p(S)$ analogously as a class of functions $f:S \to \R$.
\item For use in the appendices, $R(\Delta,x;\theta)$ denotes a generic function such that
\begin{equation}\label{eq:R}
|R(\Delta,x;\theta)| \leq F(x;\theta),
\end{equation}
where $F$ is of polynomial growth in $x$. We sometimes write $R_0(\Delta,x;\theta)$ to emphasize that the remainder term $R(\Delta,x;\theta)$ also depends on the true parameter $\theta_0$.
\end{enumerate}

\subsection{Prediction-based estimating functions}
\label{ssec:predicestfct}

The general theory of prediction-based estimating functions was developed by \cite{pbef-2000} and later extended in \cite{pbef-2011}. In this paper we consider estimating functions of the general form
\begin{equation}\label{eq:pbef}
G_n(\theta) = \sum_{i=q}^n \sum_{j=1}^N \pi_{i-1,j} \left[f_j(X_{t^n_i})-\breve{\pi}_{i-1,j}(\theta)\right],
\end{equation}
where $\{f_j\}_{j=1}^N$ is a finite set of real-valued functions in $\FSL$ and for every $j \in \{1,\ldots,N\}$, $\breve{\pi}_{i-1,j}(\theta)$ denotes the orthogonal $\FSL$-projection of $f_j(X_{t^n_i})$ onto a finite-dimensional subspace
\begin{equation}\label{eq:pred.space}
\mathcal{P}_{i-1,j} = \text{span}\left\{1,f_j\left(X_{t^n_{i-1}}\right),\ldots,f_j\left(X_{t^n_{i-{q_j}}}\right)\right\} \subset \FSL
\end{equation}
for a fixed $q_j \geq 0$. The coefficients $\pi_{i-1,j}$ are $d$-dimensional column vectors with entries belonging to $\mathcal{P}_{i-1,j}$.

The collection of subspaces $\{\mathcal{P}_{i-1,j}\}_{ij}$ are referred to as \emph{predictor spaces}. In this sense, what we predict are values of $f_j(X_{t^n_i})$ for each $i \geq q:=\max_{1 \leq j \leq N} q_j$. Most prediction-based estimating functions applied in practice are of this particular form; see e.g. \cite{pbef-2000} for applications to discretized stochastic volatility models, and \cite{iid-2004} for the case of integrated diffusions.

Since the predictor space $\mathcal{P}_{i-1,j}$ is closed, the $\FSL$-projection of $f_j(X_{t^n_i})$ onto $\mathcal{P}_{i-1,j}$ is well-defined and uniquely determined by the normal equations
\begin{equation}\label{eq:normal}
\ET\left(\pi\left[f_j(X_{t^n_i})-\breve{\pi}_{i-1,j}(\theta)\right]\right) = 0,
\end{equation}
for all $\pi \in \mathcal{P}_{i-1,j}$; see e.g. \cite{rca}. Here and in everything that follows, $\ET(\ccs)$ denotes expectation w.r.t. the probability measure $\PT$. 
Note that \eqref{eq:normal} implies that the estimating function \eqref{eq:pbef} is unbiased, i.e.
\[
\ET\left(G_n(\theta)\right) = 0.
\]

By restricting ourselves to predictor spaces of the form \eqref{eq:pred.space}, as well as only diffusion models $(X_t)$ that are stationary under $\PT$, the orthogonal projection $\breve{\pi}_{i-1,j}(\theta) = \breve{a}_n(\theta)_j^T Z_{i-1,j}$ where
\begin{equation}\label{eq:Z}
Z_{i-1,j}=\left(1,f_j\left(X_{t^n_{i-1}}\right),\ldots,f_j\left(X_{t^n_{i-{q_j}}}\right)\right)^T
\end{equation}
and $\breve{a}_n(\theta)_j^T$ is the unique $(q_j+1)$-dimensional coefficient vector
\begin{equation*}
\breve{a}_n(\theta)_j^T = \left(\breve{a}_n(\theta)_{j0},\breve{a}_n(\theta)_{j1}\ldots,\breve{a}_n(\theta)_{jq_j}\right)
\end{equation*}
determined by the moment conditions
\begin{equation}\label{eq:moments}
\ET\left[Z_{q_j-1,j} f_j\left(X_{t^n_{q_j}}\right)\right] - \ET\left[Z_{q_j-1,j}Z_{q_j-1,j}^T\right]\breve{a}_n(\theta)_j = 0.
\end{equation}
Note that in the simplest case where $q_j=0$, $\mathcal{P}_{i-1,j}=\text{span}\{1\}$ and it follows immediately from the normal equations \eqref{eq:normal} that $\breve{\pi}_{i-1,j}(\theta) = \mu_\theta(f_j)$.

We obtain an estimator by solving the estimating equation $G_n(\theta)=0$, and we call an estimator $\hat{\theta}_n$ a \emph{$G_n$-estimator} if
\[
\mathbb{P}_{\theta_0} (G_n(\hat{\theta}_n)=0) \to 1
\]
as $n \to \infty$.

\begin{rem}
If we define an equivalence relation $\sim$ on the set of estimating
functions of the form \eqref{eq:pbef} by $G_n \sim H_n$ \emph{if and
  only if} $H_n = M_nG_n$ for an invertible $d \times d$-matrix $M_n$,
equivalent estimating functions yield identical estimators
$\hat{\theta}_n$. In particular, estimators obtained from equivalent
estimating functions share the same asymptotic properties. We freely
apply this property in the proofs of Section \ref{sec:asymptotics}.
\end{rem}

\subsection{Probabilistic notions}

Two notions from the theory of stochastic processes play a central role in this paper; the infinitesimal generator of a diffusion process $(X_t)$, and the dependence property known as $\rho$-mixing.

For general stochastic processes, mixing coefficients provide a way of measuring how dynamic dependence decays over time. Various notions appear in the literature and are often used to establish central limit theorems for processes that are not martingales; see e.g. \cite{mpe}.

A stationary Markov process $(X_t)$ is said to be \emph{$\rho$-mixing} if $\rho_X(t) \to 0$ as $t \to 0$, where
\begin{equation}\label{eq:rho}
\rho_X(t) = \rho\left(\sigma(X_0),\sigma(X_t)\right),
\end{equation}
with $\rho$ denoting correlation. A review of mixing properties for stationary Markov processes can be found in \cite{svhm}. Here easily checked conditions for $\rho$-mixing of one-dimensional diffusion processes are given.

With any weak solution of \eqref{eq:X} is associated a family of operators $(P_t^\theta)_{t \geq 0}$ where for $f \in \mathscr{L}^1(\mu_\theta)$,
\begin{equation*}
P_t^\theta f(x) = \ET\left(f(X_t) \given X_0=x\right).
\end{equation*}
Obviously, $P_t^\theta: \FSL \to \FSL$, and the semigroup property $P_t^\theta \circ P_s^\theta = P_{t+s}^\theta$ holds for all $t,s \geq 0$.

The \emph{(infinitesimal) generator} $\A_\theta$ of a diffusion $(X_t)$ is defined by
\begin{equation*}
\A_\theta f = \lim_{t \to 0}\frac{P_t^\theta f - f}{t},
\end{equation*}
whenever the limit $\A_\theta f$ exists in $\FSL$. Let $\D_{\A_\theta}$ denote the \emph{domain} of $\A_\theta$. For a weak solution of \eqref{eq:X} satisfying Condition 
\ref{cond:X} below, $\C^2_p(S) \subseteq \D_{\A_\theta}$, and for all $f \in \C^2_p(S)$
it holds that $\A_\theta f = \GT f$, where
\begin{equation}\label{eq:L}
\GT f(x) = a(x;\theta)\partial_xf(x) + \frac{1}{2}b^2(x;\theta)\partial_x^2f(x);
\end{equation}
see e.g. \cite{sef}.

Recall that $\lambda \in \R$ is an eigenvalue of $\A_\theta$ if
\begin{equation*}
\mathcal{A}_\theta f = \lambda f
\end{equation*}
for some $f \in \D_{\A_\theta}$. The collection of all eigenvalues is known as the \emph{spectrum} of $\mathcal{A}_\theta$ and will be denoted by $\mathscr{S}(\A_\theta)$. From spectral theory it is known that $\mathscr{S}(\A_\theta) \subset (-\infty,0]$. If $\mathscr{S}(\A_\theta) \subset (-\infty,-\lambda^*] \cup \{0\}$ for some $\lambda^*>0$, the generator $\A_\theta$ is said to have a \emph{spectral gap}. In particular, whenever the diffusion process $(X_t)$ is ergodic and reversible under $\PT$, the existence of a spectral gap $\lambda^*>0$ is equivalent to $(X_t)$ satisfying the $\rho$-mixing property; see \cite{svhm}.

\subsection{Assumptions}

To derive asymptotic results for diffusion models of the general form \eqref{eq:X}, we impose some mild dependence and regularity conditions on $(X_t)$. 

\begin{cond}\label{cond:X}
For any $\theta \in \Theta$, the stochastic differential equation
\begin{equation*}
dX_t = a(X_t;\theta) dt + b(X_t;\theta) dB_t, \hspace{0.2cm} X_0 \sim \mu_\theta
\end{equation*}
has a \emph{weak solution} $\left(\Omega,\F,(\F_t),\PT,(B_t),(X_t)\right)$ with the property that
\begin{itemize}
\myitem $(X_t)$ is stationary and $\rho$-mixing under $\PT$.
\end{itemize}
Moreover, the a priori given triplet $(a,b,\mu_\theta)$ satisfies the regularity conditions
\begin{itemize}
\myitem $a,b \in \C^{2,0}_p(S \times \Theta)$,
\myitem $\abs{a(x;\theta)} + \abs{b(x;\theta)}\leq_C 1+|x|$,
\myitem $b(x;\theta)>0$ for $x \in S$,
\myitem $\int_S |x|^k \mu_\theta(dx) < \infty$ for all $k \geq 1$.
\end{itemize}
\end{cond}

For the discretized filtration $\{\F_{t^n_i}\}$, we let $\F^n_i:=\F_{t^n_i}$ and the notation $\mu_0=\mu_{\theta_0}$, $\Pnull=\mathbb{P}_{\theta_0}$, etc., is applied throughout the paper. \\

The following condition on the true parameter value $\theta_0$ is
essential to the asymptotic theory. Here $\textnormal{int}(\Theta)$
denotes the interior of $\Theta$. 

\begin{cond}\label{cond:T}
The parameter $\theta \in \Theta \subset \R^d$, and it holds that
$\theta_0 \in \textnormal{int}(\Theta)$.
\end{cond}

	\section{Limit theory for discretized diffusions}\label{sec:limits}

This section is devoted to limit theorems for functionals of the form
\begin{equation}\label{eq:V}
V_n(f) = \frac{1}{n} \sum_{i=1}^n f(X_{t^n_{i-1}}),
\end{equation}
where $f$ takes values in $\R$ and $\{X_{t^n_i}\}_{i=0}^n$ is a
discretization of a diffusion process $(X_t)$ that satisfies Condition
\ref{cond:X}. 

First we state the law of large numbers, which follows from the
continuous-time ergodic theorem; see Corollary 10.9 in \cite{fmp}. 

\begin{lem}\label{lem:LLN}
Let $f \in \C^1_p(S)$. Then,
\begin{equation*}
V_n(f) \cPnull \mu_0(f).
\end{equation*}
\end{lem}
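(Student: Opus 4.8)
The plan is to deduce the discrete-time law of large numbers for $V_n(f)$ from the continuous-time ergodic theorem, exploiting stationarity and $\rho$-mixing of $(X_t)$ under $\Pnull$, which are granted by Condition~\ref{cond:X}. The key observation is that $V_n(f)$ is a Riemann-type average of $f(X_t)$ along the grid $\{t^n_i\}$, and the natural comparison object is the continuous-time average $T_n^{-1}\int_0^{T_n} f(X_s)\,\mathrm{d}s$ with $T_n = n\Delta_n$. Since $(X_t)$ is stationary and ergodic (the $\rho$-mixing property, equivalently a spectral gap in the reversible case, guarantees ergodicity of the invariant measure), Corollary~10.9 in \cite{fmp} gives $T_n^{-1}\int_0^{T_n} f(X_s)\,\mathrm{d}s \to \mu_0(f)$ almost surely, hence in $\Pnull$-probability, as $T_n \to \infty$; this uses $f \in \mathscr{L}^1(\mu_0)$, which holds because $f \in \C^1_p(S)$ is of polynomial growth and $\mu_0$ has moments of all orders by Condition~\ref{cond:X}.

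First I would write
\[
V_n(f) - \frac{1}{T_n}\int_0^{T_n} f(X_s)\,\mathrm{d}s
= \frac{1}{n}\sum_{i=1}^n \left[ f(X_{t^n_{i-1}}) - \frac{1}{\Delta_n}\int_{t^n_{i-1}}^{t^n_i} f(X_s)\,\mathrm{d}s \right]
= \frac{1}{T_n}\sum_{i=1}^n \int_{t^n_{i-1}}^{t^n_i} \left[ f(X_{t^n_{i-1}}) - f(X_s) \right]\,\mathrm{d}s,
\]
and then control the right-hand side. Using $f \in \C^1_p(S)$, a first-order Taylor expansion gives $|f(X_{t^n_{i-1}}) - f(X_s)| \leq_C (1 + |X_{t^n_{i-1}}|^{C} + |X_s|^{C})\,|X_s - X_{t^n_{i-1}}|$ for $s \in [t^n_{i-1}, t^n_i]$; combining the standard $\mathscr{L}^p$ moment bound $\Enull|X_s - X_{t^n_{i-1}}|^2 \leq_C \Delta_n$ (for $s$ in that interval, from the SDE \eqref{eq:X} together with the linear-growth bound on $a,b$ and the moment bounds on $\mu_0$) with Cauchy–Schwarz and stationarity, one obtains $\Enull\big| V_n(f) - T_n^{-1}\int_0^{T_n} f(X_s)\,\mathrm{d}s \big| \leq_C \sqrt{\Delta_n} \to 0$. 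Hence the difference vanishes in $\mathscr{L}^1(\Pnull)$ and therefore in $\Pnull$-probability, and the conclusion follows by combining this with the convergence of the continuous-time average.

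Alternatively, and perhaps more directly, one can bypass the continuous-time functional entirely: $\{f(X_{t^n_{i-1}})\}_{i \geq 1}$ is, for fixed $n$, a stationary sequence whose $\rho$-mixing coefficients inherit from $\rho_X$ evaluated at integer multiples of $\Delta_n$, and a mean-ergodic/$\mathscr{L}^2$-argument (decompose $\Vnull(V_n(f))$ into a sum over lags $k$ and bound the covariance at lag $k$ by $\rho_X(k\Delta_n)\,\Vnull(f(X_0))$) shows $\Vnull(V_n(f)) \to 0$; together with $\Enull V_n(f) = \mu_0(f)$ this gives $\mathscr{L}^2$-convergence. The delicate point in this second route is that the mixing rate $\rho_X(k\Delta_n)$ degrades as $\Delta_n \to 0$, so one needs $T_n \to \infty$ to compensate and must sum $\sum_k \rho_X(k\Delta_n)$ carefully; the first route via \cite{fmp} is cleaner and is the one I would adopt. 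The main obstacle in either case is essentially bookkeeping — establishing the uniform-in-$i$ moment bound on $f(X_s) - f(X_{t^n_{i-1}})$ with the polynomial-growth weight under control — rather than any conceptual difficulty, since the heavy lifting (ergodicity) is delegated to the cited continuous-time result.
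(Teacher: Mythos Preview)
Your proposal is correct and follows exactly the route the paper indicates: the paper does not give a detailed proof of Lemma~\ref{lem:LLN} but simply states that it ``follows from the continuous-time ergodic theorem; see Corollary~10.9 in \cite{fmp}''. Your first argument---comparing $V_n(f)$ to $T_n^{-1}\int_0^{T_n} f(X_s)\,\mathrm{d}s$ and bounding the difference via the $\mathscr{L}^1$ moment estimate---is precisely the bookkeeping the paper leaves implicit, and indeed a sharper version of the same Riemann-sum comparison appears later in the paper's proof of Proposition~\ref{prop:CLT} (see \eqref{eq:CLT1}).
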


A central limit theorem requires stronger regularity assumptions on
$f$. In the following we define a suitable class of functions for our
purpose. Our only application of the central limit theorem in this
paper is to establish asymptotic normality of $G_n$-estimators in
Section \ref{sec:asymptotics}. Since $\ET\left(G_n(\theta)\right) =
0$, we can restrict attention to functions $f:S \to \R$ for which
$\mu_\theta(f)=0$ for the remainder of this section. 

The variance of the Gaussian limit distribution in the central limit
theorem involves the potential of the function $f$ with
$\mu_\theta(f)=0$. The {\it potential operator} is defined by
\begin{equation}\label{eq:U}
\U_\theta(f)(x) = \int_0^\infty P_t^\theta f(x) \dd t.
\end{equation}

To identify a (partial) domain for the operator $f \mapsto
\U_\theta(f)$, we use that the generator $\A_\theta$ of $(X_t)$ has a
spectral gap $\lambda>0$ under Condition \ref{cond:X}. This leads to a
well-known bound for the transition operator which we formulate as a
separate lemma. In the following, 
\begin{equation*}
\FSLnull = \left\{f:S \to \R : \mu_\theta(f^2)<\infty, \mu_\theta(f)=0\right\}.
\end{equation*}

\begin{lem}\label{lem:P}
Let $f \in \FSLnull$. Then under Condition \ref{cond:X} 
\begin{equation}\label{eq:P}
\norm{P_t^\theta f}_2 \leq e^{-\lambda t} \norm{f}_2
\end{equation}
for all $t \geq 0$.
\end{lem}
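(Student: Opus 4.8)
The plan is to exploit that a one-dimensional diffusion on an interval is reversible with respect to its invariant measure, so that $(P_t^\theta)_{t\ge 0}$ is a strongly continuous semigroup of self-adjoint contractions on $\FSL$ whose generator $\A_\theta$ is self-adjoint and non-positive. Under Condition~\ref{cond:X} the process is additionally ergodic and $\rho$-mixing, which, by the equivalence recalled in Section~\ref{sec:preliminaries} (cf.\ \cite{svhm}), yields the spectral gap $\mathscr{S}(\A_\theta)\subset(-\infty,-\lambda]\cup\{0\}$; moreover ergodicity forces $0$ to be a simple eigenvalue whose eigenspace consists precisely of the constant functions. Once this structure is in place, the bound \eqref{eq:P} is a one-line consequence of the spectral theorem.

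In detail, I would write the spectral resolution $\A_\theta=\int_{(-\infty,0]}s\,dE(s)$ and $P_t^\theta=\int_{(-\infty,0]}e^{ts}\,dE(s)$, where the projection-valued measure $E$ is supported on $(-\infty,-\lambda]\cup\{0\}$. Since $\ker\A_\theta$ is the space of constants, $E(\{0\})$ is the orthogonal projection of $\FSL$ onto the constants, i.e.\ $E(\{0\})f=\mu_\theta(f)$. Hence for $f\in\FSLnull$ we have $E(\{0\})f=0$, so the scalar measure $s\mapsto\langle E(s)f,f\rangle$ is concentrated on $(-\infty,-\lambda]$, and therefore
\[
\norm{P_t^\theta f}_2^2=\int_{(-\infty,-\lambda]}e^{2ts}\,d\langle E(s)f,f\rangle\le e^{-2\lambda t}\int_{(-\infty,-\lambda]}d\langle E(s)f,f\rangle=e^{-2\lambda t}\norm{f}_2^2 .
\]
Taking square roots gives \eqref{eq:P}.

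A more elementary route avoids the spectral theorem altogether: the spectral gap is equivalent to the Poincar\'e inequality $-\langle\A_\theta g,g\rangle\ge\lambda\norm{g}_2^2$ for $g\in\D_{\A_\theta}\cap\FSLnull$. Since $\mu_\theta$ is invariant, $\mu_\theta(P_t^\theta f)=\mu_\theta(f)=0$, so $P_t^\theta f\in\FSLnull$ for all $t\ge 0$; differentiating $t\mapsto\norm{P_t^\theta f}_2^2$ then gives $\tfrac{d}{dt}\norm{P_t^\theta f}_2^2=2\langle\A_\theta P_t^\theta f,P_t^\theta f\rangle\le-2\lambda\norm{P_t^\theta f}_2^2$, and Gronwall's inequality yields the claim for $f\in\D_{\A_\theta}\cap\FSLnull$, extended to all of $\FSLnull$ by density and the contraction property.

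The steps I expect to be routine are: (i) reversibility, hence self-adjointness, of the one-dimensional diffusion semigroup under Condition~\ref{cond:X}; (ii) the identification $\ker\A_\theta=\{\text{constants}\}$ via ergodicity; and (iii) in the Gronwall variant, the differentiability of $t\mapsto P_t^\theta f$ in $\FSL$ and the invariance of the relevant domain. The only genuine obstacle is bookkeeping around the spectral-gap hypothesis: it must be invoked for the self-adjoint generator on $\FSL$ with the eigenvalue $0$ split off, since $f$ is only assumed orthogonal to the constants; with that care taken, the estimate is immediate.
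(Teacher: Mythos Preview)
Your proposal is correct and follows essentially the same route as the paper: reversibility of the one-dimensional diffusion together with the spectral gap yields the exponential contraction on $\FSLnull$. The paper's own proof is a one-liner that simply invokes Theorems~2.4 and~2.6 of \cite{svhm} to obtain $\norm{P_t^\theta f}_2 \le \rho_X(t)\norm{f}_2 = e^{-\lambda t}\norm{f}_2$; your spectral-theorem argument (and the Poincar\'e--Gronwall variant) is precisely the content behind those cited results, written out in full.
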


As a consequence, $\norm{\U_\theta(f)}_2<\infty$ for any $f \in
\FSLnull$, so the operator
\begin{equation*}
U_\theta: \FSLnull \to \FSL
\end{equation*}
is well-defined. It is obviously linear. General results on existence
and regularity implications of the potential $U_\theta(f)$ for
diffusion processes $(X_t)$ and $f:S \to \R$ can be found in
\cite{pe}. 

\vspace*{0.2cm}

For the central limit theorem, we restrict ourselves to the set of functions
\begin{equation}\label{eq:FSH}
\FSH = \left\{f \in \C^2_p(S) : \mu_\theta(f)=0,\U_\theta(f) \in \C^2_p(S)\right\},
\end{equation}
which ensures that $\A_\theta(U_\theta(f)) = \GT(U_\theta(f))$ and $\FSH \subset \FSLnull$. The following result characterizes the potential $U_\theta(f)$ as the solution of the so-called \emph{Poisson equation} for any $f \in \FSH$.

\vspace*{0.2cm}

\begin{prop}\label{prop:U}
Let $f \in \FSH$. Then, $\U_\theta(f)$ is a solution of the Poisson equation, i.e.
\begin{equation*}
\GT (\U_\theta(f)) = - f,
\end{equation*}
where $\GT$ is the generator  of $(X_t)$ given by the differential
operator \eqref{eq:L}. Moreover,
\begin{equation}\label{eq:U-bound}
\norm{U_\theta(f)}_2 \leq \lambda^{-1} \norm{f}_2.
\end{equation}
\end{prop}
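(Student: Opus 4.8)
The plan is to exploit the semigroup structure directly. For $f \in \FSH \subset \FSLnull$, Lemma~\ref{lem:P} gives $\norm{P_t^\theta f}_2 \leq e^{-\lambda t}\norm{f}_2$, so $\U_\theta(f) = \int_0^\infty P_t^\theta f\,\dd t$ is a well-defined $\FSL$-valued Bochner integral, and the bound \eqref{eq:U-bound} follows at once from the triangle inequality for Bochner integrals:
\[
\norm{\U_\theta(f)}_2 \leq \int_0^\infty \norm{P_t^\theta f}_2\,\dd t \leq \norm{f}_2 \int_0^\infty e^{-\lambda t}\,\dd t = \lambda^{-1}\norm{f}_2 .
\]

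For the Poisson equation I would first show that $\U_\theta(f)$ lies in the domain $\D_{\A_\theta}$ and identify $\A_\theta(\U_\theta(f))$ by a telescoping argument. Since $P_h^\theta$ is a bounded linear operator on $\FSL$ it commutes with the Bochner integral, and the semigroup property together with the substitution $s = t + h$ gives
\[
P_h^\theta\,\U_\theta(f) = \int_0^\infty P_{t+h}^\theta f\,\dd t = \int_h^\infty P_s^\theta f\,\dd s ,
\]
whence $P_h^\theta\,\U_\theta(f) - \U_\theta(f) = -\int_0^h P_s^\theta f\,\dd s$. Dividing by $h$ and letting $h \downarrow 0$, strong continuity of the transition semigroup on $\FSL$ yields $h^{-1}\int_0^h P_s^\theta f\,\dd s \to f$ in $\FSL$, so $\U_\theta(f) \in \D_{\A_\theta}$ with $\A_\theta(\U_\theta(f)) = -f$.

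It then remains only to replace $\A_\theta$ by the differential operator $\GT$, and this is precisely what the definition of $\FSH$ buys us: by construction $\U_\theta(f) \in \C^2_p(S)$, and under Condition~\ref{cond:X} one has $\C^2_p(S) \subseteq \D_{\A_\theta}$ with $\A_\theta g = \GT g$ for all $g \in \C^2_p(S)$, as recalled in Section~\ref{sec:preliminaries}. Hence $\GT(\U_\theta(f)) = \A_\theta(\U_\theta(f)) = -f$.

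I expect the only delicate point to be the justification of the telescoping step --- interchanging the bounded operator $P_h^\theta$ with the Bochner integral and, above all, invoking strong continuity of $(P_t^\theta)_{t\geq 0}$ on $\FSL$ at $t=0$ in order to pass to the limit. For a scalar diffusion satisfying Condition~\ref{cond:X} the semigroup is reversible, hence self-adjoint and strongly continuous on $\FSL$, so this holds; it should nonetheless be stated explicitly. Everything else is routine bookkeeping with the exponential bound of Lemma~\ref{lem:P}.
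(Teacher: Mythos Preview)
Your argument is correct, but it follows a different route from the paper's own proof. For the bound \eqref{eq:U-bound} you invoke the Minkowski/Bochner triangle inequality directly, whereas the paper obtains it as the special case $n=0$ of a tail estimate $\norm{U_\theta(f)-U_\theta^{(n)}(f)}_2^2 \leq \lambda^{-2}\norm{f}_2^2 e^{-\lambda n}$, proved via Jensen's inequality and Fubini. For the Poisson equation, you work with the full potential from the outset: using $P_h^\theta\,U_\theta(f) - U_\theta(f) = -\int_0^h P_s^\theta f\,\dd s$ and strong continuity you verify the defining limit for $\A_\theta$ at $U_\theta(f)$ directly. The paper instead introduces the truncations $U_\theta^{(n)}(f) = \int_0^n P_t^\theta f\,\dd t$, cites a standard semigroup property (Property~P4 in \cite{mmp}) to get $\A_\theta U_\theta^{(n)}(f) = P_n^\theta f - f$, shows that both $U_\theta^{(n)}(f) \to U_\theta(f)$ and $\A_\theta U_\theta^{(n)}(f) \to -f$ in $\FSL$, and then appeals to closedness of $\A_\theta$ to pass to the limit. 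Your approach is shorter and more self-contained, at the price of making strong continuity at $t=0$ an explicit hypothesis (which, as you note, holds here by reversibility); the paper's approach trades this for an external reference and the closed-operator machinery. Both then finish identically by using $U_\theta(f) \in \C^2_p(S)$ to identify $\A_\theta$ with $\GT$.
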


\vspace*{0.2cm}

With Proposition \ref{prop:U} in place, we obtain the following
central limit theorem. Consistent with the general notation, we write $\U_0=\U_{\theta_0}$, $\mathscr{H}^2_0=\mathscr{H}^2_{\theta_0}$, etc., for the true parameter $\theta_0$.

\begin{prop}\label{prop:CLT}
Let $f \in \mathscr{H}^2_0$. If $n\Delta_n^3 \to 0$, then
\begin{equation*}
\sqrt{n\Delta_n} V_n(f) = \sqrt{n\Delta_n} \left(\frac{1}{n} \sum_{i=1}^n f(X_{t^n_{i-1}})\right)\cDnull \N\left(0,\V_0(f)\right),
\end{equation*}
where
\begin{equation}\label{eq:CLT.AVAR}
\V_0(f) = \mu_0\left([\partial_x U_0(f) b(\ccs;\theta_0)]^2\right) = 2\mu_0\left(f \U_0 (f)\right).
\end{equation}
\end{prop}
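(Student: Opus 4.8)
The plan is to reduce the CLT for the discretized functional $\sqrt{n\Delta_n}\,V_n(f)$ to a martingale CLT by introducing the potential $U_0(f)$, which by Proposition \ref{prop:U} solves the Poisson equation $\GT(U_0(f)) = -f$ at $\theta_0$. The classical device is to write, for each $i$,
\begin{equation*}
U_0(f)(X_{t^n_i}) - U_0(f)(X_{t^n_{i-1}}) = \int_{t^n_{i-1}}^{t^n_i} \GT (U_0(f))(X_s)\dd s + M^n_i = -\int_{t^n_{i-1}}^{t^n_i} f(X_s)\dd s + M^n_i,
\end{equation*}
where $M^n_i = \int_{t^n_{i-1}}^{t^n_i} \partial_x U_0(f)(X_s) b(X_s;\theta_0)\dd B_s$ is a martingale increment with respect to $\{\F^n_i\}$. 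Summing over $i=1,\dots,n$ and telescoping the left-hand side, one gets
\begin{equation*}
\sqrt{n\Delta_n}\,V_n(f) = \frac{1}{\sqrt{n\Delta_n}}\sum_{i=1}^n M^n_i + \frac{1}{\sqrt{n\Delta_n}}\left(U_0(f)(X_0) - U_0(f)(X_{t^n_n})\right) + \frac{1}{\sqrt{n\Delta_n}}\sum_{i=1}^n \left(\Delta_n f(X_{t^n_{i-1}}) - \int_{t^n_{i-1}}^{t^n_i} f(X_s)\dd s\right).
\end{equation*}
I would handle the three pieces separately: the boundary term, the discretization-error term, and the martingale term.

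First, the boundary term. Since $U_0(f) \in \C^2_p(S)$ and $\mu_0$ has all moments (Condition \ref{cond:X}), stationarity gives $\ET(U_0(f)(X_0)^2) < \infty$ and $\ET(U_0(f)(X_{t^n_n})^2)<\infty$ uniformly in $n$, so each is $O_{\Pnull}(1)$ and the whole boundary term is $O_{\Pnull}((n\Delta_n)^{-1/2}) \to 0$ since $n\Delta_n\to\infty$. Second, the discretization error: writing $\Delta_n f(X_{t^n_{i-1}}) - \int_{t^n_{i-1}}^{t^n_i}f(X_s)\dd s = -\int_{t^n_{i-1}}^{t^n_i}(f(X_s)-f(X_{t^n_{i-1}}))\dd s$, applying Itô's formula to $f\in\C^2_p(S)$ inside, and using Fubini and the moment bounds of Condition \ref{cond:X} on $a,b$ and the polynomial growth of $f,\GT f$, each term has $\Pnull$-mean of order $\Delta_n^2$ and one controls the sum in $L^1$ or $L^2$; the key point is that the $L^2$-norm of the sum is of order $\sqrt{n}\,\Delta_n^{3/2}$ (using $\rho$-mixing to bound the cross terms, cf.\ the variance arguments underlying Lemma \ref{lem:LLN} and the potential bound), so after dividing by $\sqrt{n\Delta_n}$ we get a term of order $\sqrt{n\Delta_n^2}=\sqrt{n\Delta_n^3/\Delta_n}\cdot\Delta_n$... more carefully, of order $(n\Delta_n)^{-1/2}\cdot\sqrt{n}\Delta_n^{3/2} = \Delta_n \to 0$, and this is exactly where a rate assumption of the flavour $n\Delta_n^3\to 0$ (or weaker) enters to kill the error; I expect the bookkeeping here to need the $\rho$-mixing decay together with Lemma \ref{lem:P} to bound covariances $\textnormal{Cov}_0(\cdot)$ at lags $k\Delta_n$ by $e^{-\lambda k\Delta_n}$.

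Third, and this is the main obstacle, the martingale term $N_n := (n\Delta_n)^{-1/2}\sum_{i=1}^n M^n_i$. I would invoke a martingale CLT (e.g.\ the triangular-array version, Corollary 3.1 in Hall–Heyde, or Theorem 5.3.5 style): it suffices to show (i) the conditional-variance condition $(n\Delta_n)^{-1}\sum_{i=1}^n \ET((M^n_i)^2\mid\F^n_{i-1}) \cPnull \V_0(f)$ and (ii) a conditional Lindeberg/Lyapunov condition $(n\Delta_n)^{-2}\sum_{i=1}^n \ET((M^n_i)^4\mid\F^n_{i-1}) \cPnull 0$. For (ii), the Itô isometry and Burkholder–Davis–Gundy give $\ET((M^n_i)^4\mid\F^n_{i-1}) \leq_C \Delta_n \ET(\int_{t^n_{i-1}}^{t^n_i}(\partial_x U_0(f)\, b)^4(X_s)\dd s \mid \F^n_{i-1})$, so the sum is $O_{\Pnull}(\Delta_n)$ after normalization by $(n\Delta_n)^2$ — here polynomial growth of $\partial_x U_0(f)$ (because $U_0(f)\in\C^2_p(S)$) and of $b$, plus the moment conditions, are what make it work. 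For (i), by Itô isometry $\ET((M^n_i)^2\mid\F^n_{i-1}) = \ET(\int_{t^n_{i-1}}^{t^n_i}(\partial_x U_0(f)\,b)^2(X_s)\dd s\mid\F^n_{i-1})$, so the normalized sum is $(n\Delta_n)^{-1}\int_0^{n\Delta_n} g(X_s)\dd s$ with $g=(\partial_x U_0(f)\,b(\cdot;\theta_0))^2$ up to a small error from replacing the conditional expectation by the integrand; the continuous-time ergodic theorem (as cited for Lemma \ref{lem:LLN}) gives convergence to $\mu_0(g) = \mu_0([\partial_x U_0(f)\,b(\cdot;\theta_0)]^2)$, which is the first expression for $\V_0(f)$ in \eqref{eq:CLT.AVAR}. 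The identity $\mu_0([\partial_x U_0(f)\,b(\cdot;\theta_0)]^2) = 2\mu_0(f\,U_0(f))$ then follows from integration by parts against the invariant measure: $\mu_0((\partial_x h)^2 b^2/2) = -\mu_0(h\,\GT h)$ for $h = U_0(f)$ in $\C^2_p(S)$ (a standard consequence of $\mu_0$ being invariant, i.e.\ $\mu_0(\GT \phi)=0$, applied to $\phi = h^2$), together with $\GT(U_0(f)) = -f$. The delicate point throughout is justifying the passage from the discrete conditional expectations to the ergodic-average form — i.e.\ controlling the remainder when $\ET(\int_{t^n_{i-1}}^{t^n_i}g(X_s)\dd s \mid \F^n_{i-1})$ is compared to $\int_{t^n_{i-1}}^{t^n_i} g(X_s)\dd s$ and then to $\Delta_n g(X_{t^n_{i-1}})$ — and this is again where the rate condition $n\Delta_n^3\to 0$ and the $\rho$-mixing bound of Lemma \ref{lem:P} are used to show the difference is $o_{\Pnull}(1)$ after normalization.
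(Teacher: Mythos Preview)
Your approach is the paper's: It\^o's formula applied to $U_0(f)$ together with the Poisson equation reduces $\sqrt{n\Delta_n}\,V_n(f)$ to a stochastic-integral martingale, a boundary term, and a discretization error, and the variance identity follows from $\mu_0(\Gnull(U_0(f)^2))=0$ exactly as you indicate. Two places where the paper is cleaner than your sketch. First, for the discretization error $A_i=\int_{t^n_{i-1}}^{t^n_i}[f(X_s)-f(X_{t^n_{i-1}})]\dd s$ the paper does not use an $L^2$/mixing bound on cross terms but the conditional-moment split (Lemma~9 of Genon-Catalot and Jacod): by Lemma~\ref{lemx:generator} one has $\Enull(A_i\mid\F^n_{i-1})\leq \Delta_n^2\,F(X_{t^n_{i-1}})$, so $(n\Delta_n)^{-1/2}\sum_i\Enull(A_i\mid\F^n_{i-1})=O_{\Pnull}\bigl((n\Delta_n^3)^{1/2}\bigr)$ --- this is precisely and only where $n\Delta_n^3\to0$ enters --- while $(n\Delta_n)^{-1}\sum_i\Enull(A_i^2\mid\F^n_{i-1})=O_{\Pnull}(\Delta_n)$ by Lemma~\ref{lemx:XX-bound}; your claimed $L^2$ rate $\sqrt{n}\,\Delta_n^{3/2}$ is too optimistic (the mixing covariance sum contributes $O(n\Delta_n^2)$ to the variance, not $O(n\Delta_n^3)$), and no $\rho$-mixing bound is actually invoked at this step. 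Second, for the martingale part the paper applies the continuous-time martingale CLT directly to $(n\Delta_n)^{-1/2}\int_0^{n\Delta_n}\partial_xU_0(f)(X_s)\,b(X_s;\theta_0)\dd B_s$, so the variance condition is simply the continuous ergodic theorem for $(n\Delta_n)^{-1}\int_0^{n\Delta_n}[\partial_xU_0(f)(X_s)\,b(X_s;\theta_0)]^2\dd s$; this sidesteps the ``passage from discrete conditional expectations to ergodic averages'' you worry about, and neither $n\Delta_n^3\to0$ nor $\rho$-mixing plays any role there.
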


\begin{rem}
Compared to the low-frequency sampling scenario where $\Delta_n=\Delta>0$, the integral defining $U_\theta(f)$ in \eqref{eq:U} can be interpreted as the limit as $\Delta \to 0$ of the discrete-time potential,
\begin{equation*}
\tilde{U}_\theta(f) = \Delta \sum_{k=0}^\infty P_{k\Delta}^\theta f,
\end{equation*}
and the role of $U_0(f)$ in Proposition \ref{prop:CLT} is similar to that of $\tilde{U}_\theta(f)$ in the classic central limit theorem for functionals $\frac{1}{n} \sum_{i=1}^n f(X_{(i-1)\Delta})$; see e.g. Theorem~1, \cite{asd}.
\end{rem}

	\section{Asymptotic theory}\label{sec:asymptotics}

In this section we present our main asymptotic results for
prediction-based estimators. The main proof is based on general
asymptotic theory for estimating functions in \cite{jacso}; see also
\cite{sm}. For the most part, we restrict the discussion to estimating
functions of the form \eqref{eq:pbef} with $N=1$ and, for simplicity,
write 
\begin{equation}\label{eq:G}
G_n(\theta) = \sum_{i=q}^n \pi_{i-1} \left[f(X_{t^n_i})-\breve{\pi}_{i-1}(\theta)\right],
\end{equation}
$\mathcal{P}_{i-1}$ for the corresponding predictor spaces and so on
for objects in Section \ref{ssec:predicestfct} that depend on $j$. The
extension to multiple predictor functions $\{f_j\}_{j=1}^N$ is
considered in Section \ref{ssec:asymptotics.m}.

\subsection{Simple predictor spaces}\label{ssec:asymptotics.s}

The simplest class of estimating functions of the form \eqref{eq:G} is
obtained for $q=0$, in which case
$\mathcal{P}_{i-1}=\text{span}\{1\}$. The orthogonal projection is
$\breve{\pi}_{i-1}(\theta) = \mu_\theta(f)$, and the one-dimensional
predictor space $\mathcal{P}_{i-1}$ enables us to estimate one real parameter
$\theta \in \Theta \subseteq \R$. Therefore, we study the
one-dimensional estimating function  
\begin{equation}\label{eq:PBEF.s}
G_n(\theta) = \sum_{i=1}^n \left[f(X_{t^n_i})-\mu_\theta(f)\right].
\end{equation}
Such estimating functions were studied by \cite{sef}.

We easily identify conditions that ensure consistency and asymptotic
normality of $G_n$-estimators 
\begin{cond}\label{cond:G.s}
Suppose that
\begin{itemize}
\myitem $f^*(x) := f(x) - \mu_0(f) \in \mathscr{H} ^2_0$,
\myitem $\theta \mapsto \mu_\theta(f) \in \C^1$.
\end{itemize}
\end{cond}

\vspace{2mm}

\begin{thm}\label{thm:PBEF.s}
Assume Condition \ref{cond:G.s} and the identifiability condition
$\partial_{\theta} \mu_\theta(f) \neq 0$ for all
$\theta \in \Theta$. Define $\kappa(\theta) = \mu_\theta(f)$. Then the
following assertions hold. 
\begin{itemize}
\myitem There exists a consistent sequence of $G_n$-estimators
$(\hat{\theta}_n)$ which, as $n \to \infty$, is uniquely given by
$\hat{\theta}_n =  \kappa^{-1}\left( \frac1n \sum_{i=1}^n f(X_{t^n_i}) \right)$
with $\Pnull$-probability approaching one.
\myitem If, moreover, $n\Delta_n^3 \to 0$, then
\begin{equation}\label{eq:AN.s}
\sqrt{n\Delta_n}\left(\hat{\theta}_n-\theta_0\right) \cDnull \mathcal{N}\left(0,\left[\partial_{\theta}\mu_{\theta_0}(f)\right]^{-2} \V_0(f)\right),
\end{equation}
where $\V_0(f) = 2 \mu_0(f^* \U_0(f^*))$.
\end{itemize}
\end{thm}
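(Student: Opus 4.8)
The plan is to read the theorem off the limit theory of Section~\ref{sec:limits} together with the inverse-function theorem and the delta method, after observing that the estimating function \eqref{eq:PBEF.s} is, up to the invertible scalar factor $n$, simply $V_n(f)-\kappa(\theta)$. Write $\bar V_n(f)=\tfrac1n\sum_{i=1}^n f(X_{t^n_i})$; it differs from $V_n(f)$ in \eqref{eq:V} only by the boundary term $\tfrac1n\big(f(X_{t^n_n})-f(X_{t^n_0})\big)$, which under stationarity is $o_{\Pnull}\big((n\Delta_n)^{-1/2}\big)$, so $\bar V_n(f)$ satisfies the same law of large numbers and central limit theorem as $V_n(f)$. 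Since $f^*=f-\mu_0(f)\in\mathscr{H}^2_0\subset\C^2_p(S)$, also $f\in\C^1_p(S)$, so Lemma~\ref{lem:LLN} gives $\bar V_n(f)\cPnull\mu_0(f)=\kappa(\theta_0)$.

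\emph{Existence, uniqueness and consistency.} By Condition~\ref{cond:G.s}, $\kappa\in\C^1$, and the identifiability assumption $\kappa'(\theta)=\partial_\theta\mu_\theta(f)\neq0$ on $\Theta$ makes $\kappa$ strictly monotone, hence globally injective, and in particular a $\C^1$-diffeomorphism of a neighbourhood of $\theta_0$ onto an open interval containing $\kappa(\theta_0)$; here one uses $\theta_0\in\textnormal{int}(\Theta)$ (Condition~\ref{cond:T}). Consequently $\Pnull\big(\bar V_n(f)\in\kappa(\Theta)\big)\to1$, and on that event one may set $\hat\theta_n:=\kappa^{-1}(\bar V_n(f))$, for which $G_n(\hat\theta_n)=n\big(\bar V_n(f)-\kappa(\hat\theta_n)\big)=0$ by construction, and which is the unique root of $G_n$ by injectivity of $\kappa$. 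Thus $(\hat\theta_n)$ is a $G_n$-estimator. Consistency then follows from the continuous mapping theorem and continuity of $\kappa^{-1}$ at $\kappa(\theta_0)$: $\hat\theta_n=\kappa^{-1}(\bar V_n(f))\cPnull\kappa^{-1}(\kappa(\theta_0))=\theta_0$.

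\emph{Asymptotic normality.} Now impose $n\Delta_n^3\to0$. Since $f^*\in\mathscr{H}^2_0$, Proposition~\ref{prop:CLT} gives
\[
\sqrt{n\Delta_n}\,\big(\bar V_n(f)-\mu_0(f)\big)=\sqrt{n\Delta_n}\,\bar V_n(f^*)\cDnull\N\big(0,\V_0(f^*)\big),\qquad \V_0(f^*)=2\mu_0\big(f^*\U_0(f^*)\big)=:\V_0(f).
\]
The map $\kappa^{-1}$ is differentiable at $\kappa(\theta_0)$ with $(\kappa^{-1})'(\kappa(\theta_0))=1/\kappa'(\theta_0)=[\partial_\theta\mu_{\theta_0}(f)]^{-1}$, so the delta method applied to $\hat\theta_n-\theta_0=\kappa^{-1}(\bar V_n(f))-\kappa^{-1}(\kappa(\theta_0))$ yields
\[
\sqrt{n\Delta_n}\,(\hat\theta_n-\theta_0)\cDnull\N\big(0,[\partial_\theta\mu_{\theta_0}(f)]^{-2}\,\V_0(f)\big),
\]
which is \eqref{eq:AN.s}.

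\emph{Main obstacle.} The proof as such contains no real obstacle: the analytic work — the law of large numbers (Lemma~\ref{lem:LLN}) and, above all, the non-standard central limit theorem under the rate $n\Delta_n^3\to0$ (Proposition~\ref{prop:CLT}), which itself rests on the Poisson-equation description of the potential in Proposition~\ref{prop:U} — has already been done. The only points demanding a little care are (i) the harmless reduction from $V_n(f)$ to the boundary-shifted average $\bar V_n(f)$ that actually appears in the estimator, and (ii) turning ``$\kappa\in\C^1$ with $\kappa'\neq0$'' into an honest inverse map on a neighbourhood of $\kappa(\theta_0)$, so that $\hat\theta_n$ is eventually well defined, unique, and a legitimate argument for the delta method.
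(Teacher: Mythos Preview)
Your proof is correct and follows essentially the same route as the paper: invertibility of $\kappa$ from $\kappa'\neq 0$, consistency via Lemma~\ref{lem:LLN} and the continuous mapping theorem, and asymptotic normality by applying Proposition~\ref{prop:CLT} together with a Taylor/delta-method argument. If anything you are slightly more careful than the paper, which silently identifies $\bar V_n(f)=\tfrac1n\sum_{i=1}^n f(X_{t^n_i})$ with $V_n(f)$; your explicit handling of the boundary term $\tfrac1n\big(f(X_{t^n_n})-f(X_{t^n_0})\big)=O_{\Pnull}(n^{-1})=o_{\Pnull}\big((n\Delta_n)^{-1/2}\big)$ is a nice touch.
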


\subsection{1-lag predictor spaces}\label{ssec:asymptotics.1}

The inclusion of past observations into the predictor space
$\mathcal{P}_{i-1}$ raises the mathematical complexity
dramatically. We show that for $q=1$, prediction-based
$G_n$-estimators remain consistent and asymptotically normal under
suitable regularity conditions. \\ 

For $q=1$, the basis vector $Z_{i-1}=(1,f(X_{t^n_{i-1}}))^T$, and it
follows from the normal equations \eqref{eq:moments} that
\begin{equation*}
\breve{\pi}_{i-1}(\theta) = \breve{a}_n(\theta)_0 + \breve{a}_n(\theta)_1 f(X_{t^n_{i-1}}),
\end{equation*}
where $\breve{a}_n(\theta)_0$ and $\breve{a}_n(\theta)_1$ are uniquely determined by the moment conditions
\begin{eqnarray*}
\breve{a}_n(\theta)_0 &=& \mu_\theta(f) \left(1-\breve{a}_n(\theta)_1\right), \\
\breve{a}_n(\theta)_1 &=& \frac{\ET\left[f(X_0)f(X_{\Delta_n})\right]-\left[\mu_\theta(f)\right]^2}{\VT f(X_0)},
\end{eqnarray*}
and consistent with a two-dimensional predictor space $\mathcal{P}_{i-1}$, we suppose that $d=2$ and study the estimating function
\begin{equation}\label{eq:PBEF.1}
G_n(\theta) = \sum_{i=1}^n \vtwo{1}{f(X_{t^n_{i-1}})} \left[f(X_{t^n_i})-\breve{a}_n(\theta)_0-\breve{a}_n(\theta)_1f(X_{t^n_{i-1}})\right].
\end{equation}

As part of the proof of Lemma \ref{lem:PBEF.1} below, we show that the
projection coefficient $\breve{a}_n(\theta)$ has an expansion
\begin{equation}\label{eq:a}
\breve{a}_n(\theta) = \vtwo{0}{1} + \Delta_n \vtwo{-K_f(\theta)\mu_\theta(f)}{K_f(\theta)} + \Delta_n^2 R(\Delta_n;\theta),
\end{equation}
where $|R(\Delta_n;\theta)| \leq C(\theta)$ and 
\begin{equation}\label{eq:K}
K_f(\theta) = \frac{\mu_\theta(f \GT f)}{\VT f(X_0)}.
\end{equation}
This observation enables us to formulate a set of regularity conditions on $G_n$ for the asymptotic theory:

\begin{cond}\label{cond:G.1}
Suppose that
\begin{itemize}
\myitem $f \in \C^4_p(S)$,
\myitem $f^*_1(x) = K_f(\theta_0)\left[\mu_0(f)-f(x)\right] \in \mathscr{H}^2_0$,
\myitem $f^*_2(x) = f(x) \left[ \Gnull f(x) - f^*_1(x) \right] \in \mathscr{H}^2_0$,
\myitem $(\theta \mapsto \mu_\theta(f)) \in \C^1$, $(\theta \mapsto
K_f(\theta)) \in \C^1$ and in \eqref{eq:a}
\begin{equation}\label{eq:dR}
\sup_{\theta \in \M} \norm{\partial_{\theta^T} R(\Delta_n;\theta)} \leq C(\M),
\end{equation}
for any compact subset $\M \subseteq \Theta$ and for
$\Delta_n$ sufficiently small. 
\end{itemize}
\end{cond}

\vspace*{0.2cm}

The matrix norm $\norm{\ccs}$ in \eqref{eq:dR} can be chosen
arbitrarily, and we suppose for convenience that $\norm{\ccs}$ is
submultiplicative. The following lemma essentially implies the
existence of a consistent sequence of $G_n$-estimators in Theorem \ref{thm:PBEF.1}. As the proof is somewhat long, we formulate it as a separate result.

\begin{lem}\label{lem:PBEF.1}
Assume that Condition \ref{cond:G.1} holds. Then, for any $\theta \in \Theta$,
\begin{equation}\label{eq:gamma}
(n\Delta_n)^{-1} G_n(\theta) \cPnull \gamma(\theta_0;\theta) = \vtwo{K_f(\theta) (\mu_\theta-\mu_0)(f)}{\mu_0(f \Gnull f) - K_f(\theta)\left[\mu_0(f^2) - \mu_0(f)\mu_\theta(f)\right]}
\end{equation}
and, moreover, for any compact subset $\M \subseteq \Theta$ 
\begin{equation}\label{eq:W}
\sup_{\theta \in \M} \norm{(n\Delta_n)^{-1}\partial_{\theta^T} G_n(\theta) - W(\theta)} \cPnull 0
\end{equation}
where
\begin{equation}
\label{eq:Wtheta}
W(\theta) =
\mtwo{1}{\mu_0(f)}{\mu_0(f)}{\mu_0(f^2)}
\mtwo{\partial_{\theta_1}\left[K_f(\theta)\mu_\theta(f)\right]}{\partial_{\theta_2}\left[K_f(\theta)\mu_\theta(f)\right]}{-\partial_{\theta_1} K_f(\theta)}{-\partial_{\theta_2} K_f(\theta)}.
\end{equation}
\end{lem}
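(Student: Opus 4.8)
The plan is to first derive the expansion \eqref{eq:a} of the projection coefficient, then split $G_n(\theta)$ into a martingale term that is negligible at rate $n\Delta_n$ and a predictable compensator that is handled by the law of large numbers (Lemma~\ref{lem:LLN}), and finally to treat \eqref{eq:gamma} and \eqref{eq:W} in turn. Throughout write $Z_{i-1}=(1,f(X_{t^n_{i-1}}))^T$ and note that $\breve\pi_{i-1}(\theta)=\breve a_n(\theta)^TZ_{i-1}$ is $\F^n_{i-1}$-measurable. \emph{For \eqref{eq:a}:} since $X_0\sim\mu_\theta$ under $\PT$, the moment conditions displayed before \eqref{eq:PBEF.1} read $\breve a_n(\theta)_1=\big(\mu_\theta(f\,P^\theta_{\Delta_n}f)-\mu_\theta(f)^2\big)/\VT f(X_0)$ and $\breve a_n(\theta)_0=\mu_\theta(f)\big(1-\breve a_n(\theta)_1\big)$. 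Because $f\in\C^4_p(S)$ one has $\GT f\in\C^2_p(S)$, so iterating the Kolmogorov/Dynkin identity $P^\theta_\Delta g=g+\int_0^\Delta P^\theta_s\GT g\dd s$ gives the short-time expansion $P^\theta_\Delta f=f+\Delta\,\GT f+\Delta^2R(\Delta,\cdot\,;\theta)$ with remainder of polynomial growth in $x$, uniformly for $\theta$ in compacts and $\Delta$ small (using the standard SDE moment bounds under the linear-growth assumption on $a,b$ in Condition~\ref{cond:X}). Substituting and dividing by $\VT f(X_0)$ gives $\breve a_n(\theta)_1=1+\Delta_nK_f(\theta)+\Delta_n^2R(\Delta_n;\theta)$ with $K_f$ as in \eqref{eq:K}, and then $\breve a_n(\theta)_0=-\Delta_nK_f(\theta)\mu_\theta(f)+\Delta_n^2R(\Delta_n;\theta)$, which is \eqref{eq:a}. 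The $\C^1$-regularity of $\mu_\theta(f)$ and $K_f(\theta)$ together with \eqref{eq:dR} in Condition~\ref{cond:G.1} make $\breve a_n(\theta)$ continuously differentiable with $\partial_{\theta^T}\breve a_n(\theta)$ obtained by differentiating \eqref{eq:a} term by term.

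\emph{Proof of \eqref{eq:gamma}.} Write $G_n(\theta)=M_n+A_n(\theta)$ with $M_n=\sum_{i=1}^nZ_{i-1}\big(f(X_{t^n_i})-P^{\theta_0}_{\Delta_n}f(X_{t^n_{i-1}})\big)$ and $A_n(\theta)=\sum_{i=1}^nZ_{i-1}\big(P^{\theta_0}_{\Delta_n}f(X_{t^n_{i-1}})-\breve\pi_{i-1}(\theta)\big)$; by the Markov property $M_n$ is a zero-mean $\{\F^n_i\}$-martingale that does not depend on $\theta$. Its increments have conditional second moment $P^{\theta_0}_{\Delta_n}(f^2)(X_{t^n_{i-1}})-\big(P^{\theta_0}_{\Delta_n}f(X_{t^n_{i-1}})\big)^2$, which by the short-time expansion applied to $f$ and $f^2$ (here $f\in\C^4_p$ is used) equals $\Delta_n\,[b(\cdot\,;\theta_0)\partial_xf]^2(X_{t^n_{i-1}})+\Delta_n^2R_0$ and is $\leq_C\Delta_n\big(1+|X_{t^n_{i-1}}|^m\big)$ for some $m$. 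Hence $\Enull\norm{M_n}^2=\sum_{i=1}^n\Enull\big[\norm{Z_{i-1}}^2\Vnull\!\big(f(X_{t^n_i})\given\F^n_{i-1}\big)\big]\leq_C n\Delta_n$ by stationarity and the moment bounds of Condition~\ref{cond:X}, so $(n\Delta_n)^{-1}M_n\to0$ in $L^2(\Pnull)$, hence $\cPnull0$. For the compensator, inserting $P^{\theta_0}_{\Delta_n}f=f+\Delta_n\Gnull f+\Delta_n^2R_0$ and \eqref{eq:a} yields
\[
(n\Delta_n)^{-1}A_n(\theta)=\frac1n\sum_{i=1}^nZ_{i-1}\Big(\Gnull f(X_{t^n_{i-1}})-K_f(\theta)\big[f(X_{t^n_{i-1}})-\mu_\theta(f)\big]\Big)+\Delta_n\cdot\frac1n\sum_{i=1}^nZ_{i-1}R_0(\Delta_n,X_{t^n_{i-1}};\theta).
\]
The entries of the first sum lie in $\C^1_p(S)$ (using $f\in\C^4_p$ and $a,b\in\C^{2,0}_p$), so by Lemma~\ref{lem:LLN} it converges $\cPnull$ to the vector with components $\mu_0(\Gnull f)-K_f(\theta)[\mu_0(f)-\mu_\theta(f)]$ and $\mu_0(f\Gnull f)-K_f(\theta)[\mu_0(f^2)-\mu_0(f)\mu_\theta(f)]$; since $f\in\C^2_p\subseteq\D_{\A_0}$ and $\mu_0$ is invariant, $\mu_0(\Gnull f)=\mu_0(\A_0f)=0$, so this vector is exactly $\gamma(\theta_0;\theta)$. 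Dominating $|R_0|$ by a polynomial and applying Lemma~\ref{lem:LLN} shows the second sum is bounded in $\Pnull$-probability, so the $\Delta_n$-term is $\cPnull0$; combining gives \eqref{eq:gamma}.

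\emph{Proof of \eqref{eq:W}.} Since $f(X_{t^n_i})$ is free of $\theta$, $\partial_{\theta^T}G_n(\theta)=-\sum_{i=1}^nZ_{i-1}\partial_{\theta^T}\breve\pi_{i-1}(\theta)=-\big(\sum_{i=1}^nZ_{i-1}Z_{i-1}^T\big)\partial_{\theta^T}\breve a_n(\theta)$, hence $(n\Delta_n)^{-1}\partial_{\theta^T}G_n(\theta)=-\big(n^{-1}\sum_{i=1}^nZ_{i-1}Z_{i-1}^T\big)\,\Delta_n^{-1}\partial_{\theta^T}\breve a_n(\theta)$. By Lemma~\ref{lem:LLN} (entries $1,f,f^2\in\C^1_p$) the first factor converges $\cPnull$ to $\mtwo{1}{\mu_0(f)}{\mu_0(f)}{\mu_0(f^2)}$, and differentiating \eqref{eq:a} gives
\[
\Delta_n^{-1}\partial_{\theta^T}\breve a_n(\theta)=\mtwo{-\partial_{\theta_1}[K_f(\theta)\mu_\theta(f)]}{-\partial_{\theta_2}[K_f(\theta)\mu_\theta(f)]}{\partial_{\theta_1}K_f(\theta)}{\partial_{\theta_2}K_f(\theta)}+\Delta_n\,\partial_{\theta^T}R(\Delta_n;\theta),
\]
whose last term vanishes uniformly on any compact $\M\subseteq\Theta$ by \eqref{eq:dR}. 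The product of the two factors is $W(\theta)$ of \eqref{eq:Wtheta}, and uniformity over $\M$ follows from the elementary estimate $\sup_{\theta\in\M}\norm{A_nB_n(\theta)-AB(\theta)}\leq\norm{A_n-A}\sup_{\M}\norm{B_n(\theta)}+\norm{A}\sup_{\M}\norm{B_n(\theta)-B(\theta)}$ (using submultiplicativity of $\norm{\cdot}$), with $A_n\cPnull A$, $\sup_\M\norm{B_n(\theta)-B(\theta)}\to0$, and $\sup_\M\norm{B(\theta)}<\infty$ by the continuity granted by Condition~\ref{cond:G.1}. This proves \eqref{eq:W}, and Lemma~\ref{lem:PBEF.1} follows.

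\emph{Main obstacle.} The delicate part is the bookkeeping of the order-$\Delta_n^2$ remainders: one must verify that, after the order-$1$ and order-$\Delta_n$ terms in $P^{\theta_0}_{\Delta_n}f-\breve\pi_{i-1}(\theta)$ cancel, the residue contributes only $O(\Delta_n)$ to $(n\Delta_n)^{-1}G_n(\theta)$, which is precisely where $f\in\C^4_p$ and the scaling by $n\Delta_n$ rather than $n$ enter. A companion subtlety, absent from the classical fixed-$\Delta$ theory, is that the martingale part $M_n$ is here of order $\sqrt{n\Delta_n}$ and hence negligible after division by $n\Delta_n$; this rests on the diffusion-specific bound $\Vnull\!\big(f(X_{t^n_i})\given\F^n_{i-1}\big)=O(\Delta_n)$.
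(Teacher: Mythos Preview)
Your proof is correct and follows essentially the same route as the paper's. Both arguments derive the expansion \eqref{eq:a} via the generator expansion, identify the same predictable compensator, show it converges by Lemma~\ref{lem:LLN} (using $\mu_0(\Gnull f)=0$), and treat $\partial_{\theta^T}G_n$ through the factorization $\big(n^{-1}\sum Z_{i-1}Z_{i-1}^T\big)\cdot\Delta_n^{-1}\partial_{\theta^T}\breve a_n(\theta)$ with uniformity from the product bound. The only stylistic difference is in controlling the fluctuation: you make the martingale--compensator split explicit and bound $(n\Delta_n)^{-1}M_n\to 0$ in $L^2(\Pnull)$ directly via the conditional-variance estimate $\Vnull\!\big(f(X_{t^n_i})\mid\F^n_{i-1}\big)=O(\Delta_n)$ and stationarity, whereas the paper packages the same computation as a verification of the hypotheses of Lemma~9 in \cite{edc-1993} (sum of conditional means converges, sum of conditional second moments vanishes). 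These are two phrasings of the same idea.
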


\vspace*{0.2cm}

\begin{thm}\label{thm:PBEF.1}
Assume Condition \ref{cond:G.1} and suppose that $W(\theta_0)$ is
non-singular and that the following identifiability condition is satisfied
\begin{equation*}
\gamma(\theta_0;\theta) \neq 0 \ \ \mbox{ for all } \theta \neq \theta_0.
\end{equation*}
Then the following assertions hold:
\begin{itemize}
\myitem There exists a consistent sequence of $G_n$-estimators
$(\hat{\theta}_n)$, which is unique in any compact 
subset $\M \subseteq \Theta$ containing $\theta_0$ with
$\Pnull$-probability approaching one as $n \to \infty$.
\myitem If, moreover, $n\Delta_n^3 \to 0$, then
\begin{equation}\label{eq:AN.1}
\sqrt{n\Delta_n}\left(\hat{\theta}_n-\theta_0\right) \cDnull \mathcal{N}_2\left(0,\left[W(\theta_0)^{-1} \V_0(f) (W(\theta_0)^{-1})^T\right]\right),
\end{equation}
wheren $W(\theta_0))$ is given by \eqref{eq:Wtheta} and
\begin{eqnarray*}
\V_0(f)_{11} &=&  \mu_0\left(\left[\partial_x
\U_0(f^*_1)b(\ccs;\theta_0)\right]^2\right) = 2 \mu_0\left( f_1^*
                 \U_0(f^*_1)\right) \\ && \\
\V_0(f)_{12} &=&  \V_0(f)_{21} = \mu_0\left( \partial_x \U_0(f^*_1)\left[\partial_x
                 U_0(f^*_2) + f f'\right]
                 b^2(\ccs;\theta_0)\right) \\
&=& 
\mu_0\left( f_1^*  \U_0(f^*_2) + f_2^*  \U_0(f^*_1)  \right) +
   \mu_0\left( \partial_x \U_0(f^*_1) ff'  b^2(\ccs;\theta_0)\right)
  \\ && \\
\V_0(f)_{22} &=&  \mu_0\left(\left[\partial_x \U_0(f^*_2) +
                 f f' \right]^2 b^2(\ccs;\theta_0)\right) \\
&=& 2 \mu_0\left( f_2^* \U_0(f^*_2)\right) + \mu_0 \left( [ff'
    b(\ccs;\theta_0)]^2\right) + 2 \mu_0\left( \partial_x \U_0(f^*_2)
    ff'  b^2(\ccs;\theta_0)\right). 
\end{eqnarray*}

\end{itemize}
\end{thm}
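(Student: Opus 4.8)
The plan is to deduce Theorem~\ref{thm:PBEF.1} from the general asymptotic theory for estimating functions of \cite{jacso}, using Lemma~\ref{lem:PBEF.1} and Proposition~\ref{prop:CLT} as the two key ingredients. For the consistency part, Lemma~\ref{lem:PBEF.1} already gives us the in-probability limit $\gamma(\theta_0;\theta)$ of the normalised estimating function $(n\Delta_n)^{-1}G_n(\theta)$ together with locally uniform convergence of its $\theta$-derivative to $W(\theta)$. Since $\gamma(\theta_0;\theta_0)=0$ (indeed $\mu_{\theta_0}(f\Gnull f)=-\V_0 $-type identity via Proposition~\ref{prop:U}, and the first coordinate vanishes trivially when $\theta=\theta_0$), the identifiability assumption $\gamma(\theta_0;\theta)\neq 0$ for $\theta\neq\theta_0$, the non-singularity of $W(\theta_0)$, and the continuity of $\gamma$ and $W$ inherited from Condition~\ref{cond:G.1}, the standard argument (e.g. Theorem~2.5 in \cite{jacso}, or the inverse-function/degree argument in \cite{sm}) yields a sequence $\hat\theta_n$ solving $G_n(\hat\theta_n)=0$ with $\Pnull$-probability tending to one, consistent, and unique within any fixed compact $\M\ni\theta_0$.

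For asymptotic normality, I would Taylor-expand $0=G_n(\hat\theta_n)$ around $\theta_0$ in the usual way, writing
\begin{equation*}
0 = (n\Delta_n)^{-1}G_n(\theta_0) + \left[(n\Delta_n)^{-1}\int_0^1 \partial_{\theta^T}G_n(\theta_0 + u(\hat\theta_n-\theta_0))\,\dd u\right](\hat\theta_n-\theta_0),
\end{equation*}
so that $\sqrt{n\Delta_n}(\hat\theta_n-\theta_0) = -\bar W_n^{-1}\sqrt{n\Delta_n}\,(n\Delta_n)^{-1}G_n(\theta_0)$, where $\bar W_n$ is the averaged derivative matrix. By consistency and \eqref{eq:W}, $\bar W_n \cPnull W(\theta_0)$, which is invertible by hypothesis, so by Slutsky it remains to find the limit law of $\sqrt{n\Delta_n}\,(n\Delta_n)^{-1}G_n(\theta_0)=(n\Delta_n)^{-1/2}G_n(\theta_0)$. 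Here I would substitute the expansion \eqref{eq:a} of $\breve a_n(\theta_0)$ into \eqref{eq:PBEF.1}: the leading term $\binom{0}{1}$ makes $f(X_{t^n_i})-\breve a_n(\theta_0)_0-\breve a_n(\theta_0)_1 f(X_{t^n_{i-1}})$ collapse to $f(X_{t^n_i})-f(X_{t^n_{i-1}})$ plus $\Delta_n$ times an explicit $K_f(\theta_0)$-term plus $\Delta_n^2 R_0$. The telescoping / martingale-increment structure of $\sum_i [f(X_{t^n_i})-f(X_{t^n_{i-1}})]$ combined with a one-step generator expansion $\ET(f(X_{t^n_i})\mid\F^n_{i-1}) = f(X_{t^n_{i-1}}) + \Delta_n\Gnull f(X_{t^n_{i-1}}) + \Delta_n^2 R$ lets me rewrite $(n\Delta_n)^{-1/2}G_n(\theta_0)$, up to an $o_{\Pnull}(1)$ remainder controlled by $n\Delta_n^3\to 0$ and the polynomial-growth/moment bounds from Condition~\ref{cond:X}, as $\sqrt{n\Delta_n}$ times $V_n$ applied to the two functions whose potentials appear in the statement. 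Concretely, matching coordinates, the first component is driven by $f^*_1 = K_f(\theta_0)[\mu_0(f)-f]$ and the second by $f^*_2 = f[\Gnull f - f^*_1]$ together with the extra non-centred piece $ff'$ that survives because $\partial_x$ of a potential interacts with $b^2$ — this is exactly the source of the $\mu_0([ff'b(\cc;\theta_0)]^2)$ and cross terms in $\V_0(f)_{12},\V_0(f)_{22}$. Applying the bivariate version of Proposition~\ref{prop:CLT} (the Cramér--Wold device reduces it to the scalar CLT already proved, since any linear combination $\alpha_1 f^*_1 + \alpha_2 f^*_2 + \alpha_2 ff'$-type function lies in $\mathscr{H}^2_0$ under Condition~\ref{cond:G.1}) gives joint asymptotic normality with covariance matrix $\V_0(f)$ as displayed, using the identity $\mu_0([\partial_x U_0(g)b(\cc;\theta_0)]^2)=2\mu_0(gU_0(g))$ from \eqref{eq:CLT.AVAR} and its polarised version for the off-diagonal entry. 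The delta-method/Slutsky step then produces \eqref{eq:AN.1}.

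The main obstacle, and where the genuinely non-standard analysis lies, is the passage from $(n\Delta_n)^{-1/2}G_n(\theta_0)$ to a clean functional $\sqrt{n\Delta_n}V_n(\text{something in }\mathscr{H}^2_0)$: because $\breve a_n(\theta_0)$ is only $\Delta_n$-close to $\binom{0}{1}$, the estimating function is an approximate martingale estimating function only to order $\Delta_n$, so the usual martingale CLT machinery does not apply directly and one must carefully separate a true martingale part from a drift part of size $O(n\Delta_n \cdot \Delta_n) = o(\sqrt{n\Delta_n})$ — this is precisely where $n\Delta_n^3\to 0$ is needed, since a remainder of order $n\Delta_n^2$ must be $o(\sqrt{n\Delta_n})$, i.e. $n\Delta_n^3\to 0$. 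Controlling the $\Delta_n^2 R_0(\Delta_n;\cdot)$ terms and the contribution of $\partial_{\theta^T}R$ uniformly on compacts (via \eqref{eq:dR}) under only polynomial-growth assumptions, and verifying that all the functions assembled along the way actually belong to $\mathscr{H}^2_0$ so that Proposition~\ref{prop:CLT} is applicable, is the technical heart of the argument; the rest is bookkeeping with Slutsky and the potential-operator identities.
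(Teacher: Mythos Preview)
Your overall architecture matches the paper's: invoke \cite{jacso} for consistency and uniqueness via Lemma~\ref{lem:PBEF.1}, then establish a joint CLT for $(n\Delta_n)^{-1/2}G_n(\theta_0)$ via Cram\'er--Wold and finish with Slutsky. The first coordinate does reduce to $\sqrt{n\Delta_n}\,V_n(f_1^*)+o_{\Pnull}(1)$, since $\sum_i[f(X_{t^n_i})-f(X_{t^n_{i-1}})]$ telescopes, and Proposition~\ref{prop:CLT} handles that piece exactly as you say.

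The gap is in the second coordinate. You assert that the Cram\'er--Wold linear combination ``$\alpha_1 f_1^*+\alpha_2 f_2^*+\alpha_2 ff'$'' lies in $\mathscr{H}^2_0$ so that Proposition~\ref{prop:CLT} applies; but that is not how the $ff'$ contribution arises, and $ff'$ is in general not $\mu_0$-centred. Multiplication by $f(X_{t^n_{i-1}})$ kills the telescoping, so the second coordinate does \emph{not} collapse to $\sqrt{n\Delta_n}\,V_n(g)$ for any $g\in\mathscr{H}^2_0$. Concretely, applying It\^o's formula to $f(X_{t^n_i})-f(X_{t^n_{i-1}})$ produces a drift term, a negligible $A_i$-term, and a martingale increment $M_i(\theta_0)=\int_{(i-1)\Delta_n}^{i\Delta_n}f'(X_s)b(X_s;\theta_0)\,dB_s$; after absorbing the drift into $f_2^*$ one obtains
\[
(n\Delta_n)^{-1/2}\sum_{i=1}^n g_2 \;=\; \sqrt{n\Delta_n}\,V_n(f_2^*)\;+\;(n\Delta_n)^{-1/2}\sum_{i=1}^n f(X_{t^n_{i-1}})\,M_i(\theta_0)\;+\;o_{\Pnull}(1).
\]
The second sum is a genuine martingale array that is not of $V_n$-form, so Proposition~\ref{prop:CLT} does not cover it. The paper's remedy is to import the stochastic-integral representation \eqref{eqn:CLT.equiv2} of $\sqrt{n\Delta_n}\,V_n(f^*)$ (with $f^*=c_1f_1^*+c_2f_2^*$) from the \emph{proof} of Proposition~\ref{prop:CLT} and merge it with the $f(X_{t^n_{i-1}})M_i$ sum into a single stochastic integral with integrand $[\partial_x U_0(f^*)(X_s)+f(X_{t^n_{i-1}})f'(X_s)]\,b(X_s;\theta_0)$. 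One then checks the martingale CLT conditions from scratch for this combined integrand (conditional quadratic variation via the It\^o isometry and Lemma~\ref{lemx:generator}; the conditional Lyapunov condition via Burkholder--Davis--Gundy), and this is precisely where the displayed form of $\V_0(f)$---with $ff'$ sitting inside the integrand alongside $\partial_x U_0(f^*)$, not as an argument of $V_n$---comes from. This joint martingale argument is the ``extra work'' flagged in the remark after the theorem, and it is the step your plan skips.
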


\begin{rem}
If we denote the estimating function \eqref{eq:PBEF.1} as
\begin{equation*}
G_n(\theta) = \sum_{i=1}^n g(\Delta_n,X_{t^n_i},X_{t^n_{i-1}};\theta),
\end{equation*}
the proof of Lemma \ref{lem:PBEF.1} shows that
\begin{equation*}
\ET\left(g(\Delta_n,X_{t^n_i},X_{t^n_{i-1}};\theta) \given \F^n_{i-1}\right) = \Delta_n  g^*(X_{t^n_{i-1}};\theta) + \Delta_n^2 R(\Delta_n,X_{t^n_{i-1}};\theta)
\end{equation*}
for a non-zero function $g^*$ and $\theta \in \Theta$. Therefore, the
estimating functions in this section lie outside the class of
approximate martingale estimating functions defined in
\cite{eed-2017}. In particular, the proof of asymptotic normality in Theorem
\ref{thm:PBEF.1} requires extra work, because the remainder term
obtained by compensating $G_n$ is non-negligible. 
\end{rem}

\subsection{Multiple predictor functions and optimal estimation}\label{ssec:asymptotics.m}

Estimating functions with multiple predictor functions,
\begin{equation}\label{eq:G.m}
G_n(\theta) = \sum_{i=q}^n \sum_{j=1}^N \pi_{i-1,j} \left[f_j(X_{t^n_i})-\breve{\pi}_{i-1,j}(\theta)\right]
\end{equation}
appear frequently in practice. In the following, we indicate how to extend the asymptotic theory from estimating functions with a single predictor function \eqref{eq:G} to the more general case \eqref{eq:G.m} and briefly consider optimal estimation in relation to over-identification of the parameter $\theta \in \Theta \subset \R^d$. \\

To extend the proof in Appendix~A from estimating functions with a
single predictor function \eqref{eq:G} to estimating functions of the
more general form \eqref{eq:G.m}, we consider the more compact vector representation of the estimating functions \eqref{eq:G.m}
\begin{equation}\label{eq:PBEF.m}
G_n(\theta) = A_n(\theta) \sum_{i=q}^n Z_{i-1}\left[F(X_{t^n_i}) - \breve{\Pi}_{i-1}(\theta)\right],
\end{equation}
where $F(x)=\left(f_1(x),\ldots,f_N(x)\right)^T$, $\breve{\Pi}_{i-1}(\theta) = \left(\breve{\pi}_{i-1,1}(\theta),\ldots,\breve{\pi}_{i-1,N}(\theta)\right)^T$ and
\begin{equation}\label{eq:ZM}
Z_{i-1} = \left(
	\begin{array}{cccc}
		Z_{i-1,1} & 0_{q_1+1} & \cdots & 0_{q_1+1} \\ 0_{q_2+1} & Z_{i-1,2} & \cdots & 0_{q_2+1} \\ \vdots & \vdots & \ddots & \vdots \\ 0_{q_N+1} & 0_{q_N+1} & \cdots & Z_{i-1,N}
	\end{array}
\right).
\end{equation}
Recall that $Z_{i-1,j}$ denotes the column vector \eqref{eq:Z} of basis elements of $\mathcal{P}_{i-1,j}$ and the notation $0_{q_j+1}$ denotes a column vector of length $q_j+1$ containing zeroes only. Consistently, the dimension of $Z_{i-1}$ in \eqref{eq:ZM} is $\bar{d} \times N$ where $\bar{d}:= N + \sum_{j=1}^N q_j$. The coefficient matrix $A_n(\theta)$ is $d \times \bar{d}$ to match a $d$-dimensional parameter $\theta$. \\

To prove asymptotic results for the more general estimating equations,
impose the condition that $A_n(\theta) \to A(\theta)$ as $n \to
\infty$ and examine, by methods analogous to those used above, the normalized sum 
\begin{equation*}
V_n \times \sum_{i=q}^n Z_{i-1}\left[F(X_{t^n_i}) - \breve{\Pi}_{i-1}(\theta)\right],
\end{equation*}
where $V_n$ is a diagonal $\bar{d} \times \bar{d}$ matrix,
\begin{equation*}
V_n = \text{diag}\left(v_{n,1}^{(1)},\ldots,v_{n,q_1+1}^{(1)},\ldots,v_{n,1}^{(N)},\ldots,v_{n,q_N+1}^{(N)}\right),
\end{equation*}
and $v_{n,k_j}^{(j)} \to 0$ at appropriate rates, e.g. $v_{n,k_j}^{(j)}=n^{-1}$ or $v_{n,k_j}^{(j)} = (n\Delta_n)^{-1}$. \\

The condition $d \leq \bar{d}$ is necessary for $\theta$ to be
identified by the estimating equation $G_n(\theta)=0$, and we say that
$\theta$ is \emph{over-identified} if $d<\bar{d}$. Whereas $Z_{i-1}$,
$F$ and $\breve{\Pi}_{i-1}(\theta)$ are fully determined by our choice
of predictor functions $\{f_j\}_{j=1}^N$ and corresponding predictor
spaces $\{\mathcal{P}_{i-1,j}\}_{j}$, the coefficient matrix
$A_n(\theta)$ can be chosen optimally if $d<\bar{d}$, see 
\cite{oe} and \cite{pbef-2011}.

	\section{Estimating the asymptotic variance}\label{sec:avar}

Estimation of the asymptotic variance (AVAR) of $\hat{\theta}_n$ is
necessary for the construction of confidence intervals in practice. In
this section we propose a Monte Carlo-based method for calculating the
difficult parts of the asymptotic variance (or covariance matrix) for
the estimators derived in Sections \ref{ssec:asymptotics.s} and
\ref{ssec:asymptotics.1}. Moreover, we derive an upper bound for
$\AVT$ for estimating functions \eqref{eq:PBEF.s} and show that it is
exact when estimating the mean of an Ornstein-Uhlenbeck process.

Terms in the asymptotic variance that are integrals of known functions
with respect to the invariant measure can be found by standard
methods. The difficult parts of the asymptotic variance are integrals
with respect to the invariant measure that involve the potential. In
the expression for $\V_0(f)$ in Theorem \ref{thm:PBEF.1} there are
terms of the form $\mu_\theta (f_1 \partial_x U_\theta (f_2))$, where
$f_2 \in \FSH$. If we assume that the invariant measure $\mu_\theta$
has a density $\nu_\theta$ with respect to Lebesgue measure on the
state space $S = (\ell, r)$ ($-\infty \leq \ell < r \leq \infty)$,
which holds under weak regularity conditions, then it follows by
integration by parts that 
\begin{eqnarray*}
\mu_\theta \left( f_1 \partial_x U_\theta (f_2) \right) &=&
\int_\ell^rf_1(x) \nu_\theta (x) \partial_x U_\theta (f_2) (x) dx \\
&=& \nu_\theta (r) f_1(r) U_0(f_2)(r) - \nu_\theta (\ell) f_1(\ell)
    U_0(f_2)(\ell) -  \mu_\theta \left( U_0 (f_2) \left[ f_1' + f_1
    (\log \nu_\theta)'  \right] \right),
\end{eqnarray*}
where the function values at the end-points may have to be interpreted
as limits and will often be equal to zero. Now an inspection of the
expressions for the asymptotic variance in Theorems \ref{thm:PBEF.s} and
\ref{thm:PBEF.1} shows that all difficult terms are of the form $\mu_\theta
(g_1 U_\theta (g_2))$ with $g_1 \in \FSL$ and $g_2 \in \FSH$, and in
the following we propose a Monte Carlo method for calculating such
terms.  

For the construction we suppose that $\{T_i\}$ is a sequence of
independent random variables defined on an auxiliary probability space
$(\Omega',\F',\mathbb{P}'_\gamma)$ such that $T_i \sim \exp(\gamma)$
and consider the product extension 
\begin{equation*}
\tilde{\Omega}	= \Omega \times \Omega', \hspace*{0.5cm} \tilde{\F} = \F \otimes \F', \hspace*{0.5cm} \PTG = \PT \times \mathbb{P}'_\gamma.
\end{equation*}
Obviously, $\ETG f(X_0)) = \ET f(X_0)$ for any $f \in \mathscr{L}^1(\mu_\theta)$. 
For any $g_1 \in \FSL$ and $g_2 \in \FSH$, 
\begin{eqnarray*}
\mu_\theta\left(g_1\U_\theta(g_2)\right)
&=& \int_S g_1(x) \left(\int_0^\infty P_t^\theta g_2(x) \dd t\right) \mu_\theta(dx) \\
&=& \int_0^\infty \left(\int_S g_1(x) P_t^\theta g_2(x) \mu_\theta(dx)\right) \dd t \\
&=& \int_0^\infty \left(\int_S \ET\left(g_1(X_0)g_2(X_t) \given X_0=x\right) \mu_\theta(dx)\right) \dd t \\
&=& \int_0^\infty \ET\left(g_1(X_0)g_2(X_t)\right) \dd t \\
&=& \gamma^{-1} \int_0^\infty e^{\gamma t} \ETG\left(g_1(X_0)g_2(X_t)\right) \gamma e^{-\gamma t} \dd t \\
&=& \gamma^{-1} \int_0^\infty \ETG\left(e^{\gamma T_i} g_1(X_0)g_2(X_{T_i}) \given T_i=t\right) \gamma e^{-\gamma t} \dd t \\
&=& \gamma^{-1} \ETG\left[e^{\gamma T_i} g_1(X_0)g_2(X_{T_i})\right],
\end{eqnarray*}
where we have used Fubini's theorem and the fact that $(X_t)$ and $T_i$ are
independent on $\tilde{\Omega}$ under $\PTG$.

As a consequence, if $(X^{(i)}_t)$ are independent trajectories of
$(X_t)$ under $\PT$, the estimator 
\begin{equation}\label{eq:AVAR.est}
\gamma^{-1} \frac{1}{K} \sum_{i=1}^{K} e^{\gamma T_i} g_1\left(X^{(i)}_0\right) g_2\left(X^{(i)}_{T_i}\right)
\end{equation}
converges $\PTG$-almost surely to
$\mu_\theta\left(g_1\U_\theta(g_2)\right)$ as $K \to \infty$ for any
$g_1 \in \FSL$ and $g_2 \in \FSH$.

\subsection{Simple predictor spaces}\label{ssec:avar.s}

Let us consider the estimator in Section \ref{ssec:asymptotics.s} in
detail. By Theorem \ref{thm:PBEF.s}, 
\begin{equation}\label{eq:AVAR.s}
\AVT = \frac{2\mu_0\left(f^* \U_0(f^*)\right)}{[\partial_{\theta}\mu_0(f)]^2}
\end{equation}
with $f^*=f-\mu_0(f)$.
Thus the following algorithm can be used to estimate the asymptotic variance: \\

\begin{minipage}{14.2cm}
	\begin{framed}
		\textsc{Monte Carlo Estimation of $\AVT$}
		\emph{
		\begin{enumerate}
		\item	Determine $\hat{\theta}_n$,
		\item	Simulate $K$ independent variables $T_i \sim \exp(\gamma)$ for a fixed $\gamma>0$,
		\item	Simulate $K$ independent trajectories $t \mapsto X_t^{(i)}$ on $[0,T_i]$ under $\PTH$,
		\item	Evaluate
				\begin{equation}\label{eq:AVE}
				\AVE = 2 \cc [\partial_{\theta}\mu_{\hat{\theta}_n}(f)]^{-2} \gamma^{-1} \frac{1}{K} \sum_{i=1}^{K} e^{\gamma T_i} \hat{f}^*\left(X^{(i)}_0\right) \hat{f}^*\left(X^{(i)}_{T_i}\right),
				\end{equation}
				where $\hat{f}^*(x) := f(x)-\mu_{\hat{\theta}_n}(f)$.
		\end{enumerate}
		}
	\end{framed}
\end{minipage}

\vspace*{0.5cm}

In addition, the mixing property of $(X_t)$ leads to the following upper bound for $\AVT$.

\begin{prop}\label{prop:AVAR}
Suppose that $(X_t)$ and $G_n(\theta)$ satisfy \ref{cond:X} and \ref{cond:G.s}, respectively, and let $\lambda_0$ denote the spectral gap of $(X_t)$ under $\Pnull$. Then,
\begin{equation}\label{eq:AVAR.bound}
\AVT \leq \frac{2 \, \Vnull
  f(X_0)}{\lambda_0 \, [\partial_\theta\mu_{\theta_0}(f)]^2}. 
\end{equation}
\end{prop}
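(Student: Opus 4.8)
The plan is to read off the closed form of $\AVT$ from Theorem~\ref{thm:PBEF.s} and then to control its only nontrivial factor by the exponential decay of the transition semigroup furnished by the spectral gap. Writing $f^* = f - \mu_0(f)$, Theorem~\ref{thm:PBEF.s} gives $\AVT = 2\mu_0\bigl(f^*\,\U_0(f^*)\bigr)\,/\,[\partial_\theta\mu_{\theta_0}(f)]^2$, and since $\mu_0(f^*) = 0$ we have $\norm{f^*}_2^2 = \mu_0((f^*)^2) = \Vnull f(X_0)$. Hence \eqref{eq:AVAR.bound} reduces to the single estimate
\[
\mu_0\bigl(f^*\,\U_0(f^*)\bigr) \;\le\; \lambda_0^{-1}\,\norm{f^*}_2^2 .
\]

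To prove this I would first record that Condition~\ref{cond:G.s} puts $f^* \in \mathscr{H}^2_0 \subset \FSLnull$, so $\U_0(f^*) \in \FSL$ is well defined and Lemma~\ref{lem:P} applies with $\theta = \theta_0$ and spectral gap $\lambda_0 > 0$. Exactly as in the computation in Section~\ref{sec:avar} with $g_1 = g_2 = f^*$, Fubini's theorem then gives $\mu_0\bigl(f^*\,\U_0(f^*)\bigr) = \int_0^\infty \mu_0\bigl(f^*\,P_t^0 f^*\bigr)\dd t$; the interchange is legitimate because Cauchy--Schwarz together with Lemma~\ref{lem:P} yield $\int_S \abs{f^*(x)\,P_t^0 f^*(x)}\,\mu_0(dx) \le \norm{f^*}_2\,\norm{P_t^0 f^*}_2 \le e^{-\lambda_0 t}\norm{f^*}_2^2$, which is integrable on $[0,\infty)$, and the same display shows $f^*\U_0(f^*) \in \mathscr{L}^1(\mu_0)$ so that the left-hand side makes sense. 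Bounding the integrand by this estimate and integrating gives $\mu_0\bigl(f^*\,\U_0(f^*)\bigr) \le \norm{f^*}_2^2 \int_0^\infty e^{-\lambda_0 t}\dd t = \lambda_0^{-1}\norm{f^*}_2^2$, and substituting $\norm{f^*}_2^2 = \Vnull f(X_0)$ into the formula for $\AVT$ yields \eqref{eq:AVAR.bound}.

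There is no genuine obstacle here: the whole argument is a one-line combination of Cauchy--Schwarz with the spectral-gap bound of Lemma~\ref{lem:P}, together with a routine Fubini interchange. The only point worth spelling out is that the Fubini step and the integrability $f^*\U_0(f^*) \in \mathscr{L}^1(\mu_0)$ both rest on the very same exponential estimate, so nothing beyond Theorem~\ref{thm:PBEF.s} and Lemma~\ref{lem:P} is required.
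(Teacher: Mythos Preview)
Your proof is correct and uses essentially the same ingredients as the paper: Cauchy--Schwarz together with the exponential decay $\norm{P_t^0 f^*}_2 \le e^{-\lambda_0 t}\norm{f^*}_2$ from Lemma~\ref{lem:P}. The paper's version is slightly more streamlined in that it applies Cauchy--Schwarz once to $\mu_0(f^*\U_0(f^*))$ and then invokes the ready-made bound $\norm{\U_0(f^*)}_2 \le \lambda_0^{-1}\norm{f^*}_2$ of Proposition~\ref{prop:U}, whereas you unfold $\U_0$ as a time integral, apply Cauchy--Schwarz under the $dt$-integral, and integrate the exponential; but this is only a difference in packaging, not in substance.
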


\begin{example}
The Ornstein-Uhlenbeck process
\begin{equation*}
dX_t = \kappa(\eta-X_t) dt + \xi dB_t,
\end{equation*}
with $\kappa, \xi > 0$ and $\eta \in \R$, satisfies Condition
\ref{cond:X}. The invariant distribution is
$\N\left(\eta,\frac{\xi^2}{2\kappa}\right)$. \\ 

Estimation of $\eta$ (with $\kappa$ and $\xi$ are known ) provides an
illustrative example where the upper bound of in \eqref{eq:AVAR.bound}
is attained. We choose $f(x) = x$, and by direct calculation,
\begin{eqnarray*}
\U_0(f^*)(x) = \int_0^\infty \left[\Enull\left(X_t - \eta_0 \given
  X_0=x\right) \right] \dd t = \int_0^\infty \left[xe^{-\kappa t} +
  \eta_0\left(1-e^{-\kappa t}\right) - \eta_0\right] \dd t  = \frac{(x-\eta_0)}{\kappa}.
\end{eqnarray*}
As a consequence,
\begin{equation}
\mu_0\left(f^* \U_0(f^*)\right) = \frac{1}{\kappa} \int_{\R} (x-\eta_0)^2 \mu_0(dx) = \frac{\xi^2}{2\kappa^2}
\end{equation}
and
\begin{equation*}
\AVT = \left(\frac{\xi}{\kappa}\right)^2.
\end{equation*}
The bound \eqref{eq:AVAR.bound} is attained because $\Vnull(X_0) =
\frac{\xi^2}{2\kappa}$ and $\lambda_0=\kappa$. 
\end{example}

\bibliographystyle{natbib}
\bibliography{ref.bib}
	\appendix
	\section*{Appendix A: Proofs}\label{sec:app.proofs}

\begin{proof}[Proof of Lemma \ref{lem:P}]
The diffusion process $(X_t)$ is reversible under Condition
\ref{cond:X}, so by Theorem~2.4 and Theorem~2.6 in \cite{svhm} 
$\norm{P_t^\theta f}_2 \leq \rho_X(t) \norm{f}_2 =  e^{-\lambda t} \norm{f}_2$,
for any $f \in \FSLnull$ , where $\lambda>0$ denotes the spectral gap
of $\A_\theta$. 
\end{proof}

\vspace{1mm}

\begin{proof}[Proof of Proposition \ref{prop:U}]
Let $U_\theta^{(n)}(f)=\int_0^n P_t^\theta f \dd t$. By Property~P4 in \cite{mmp}, $U_\theta^{(n)}(f) \in \D_{\A_\theta}$ for all $n \in \mathbb{N}$ and
\begin{equation*}
\lim_{n \to \infty} \A_\theta \left(U_\theta^{(n)}(f)\right) = \lim_{n \to \infty} \left[P_n^\theta f-f\right] = - f,
\end{equation*}
where limits are w.r.t. $\norm{\ccs}_2$. The latter equality holds because
$\norm{P_n^\theta f}_2 \leq \norm{f}_2 e^{-\lambda n} \to 0$.

By Jensen's inequality, Fubini's theorem and Lemma, \ref{lem:P}
$U_\theta^{(n)}(f)$ converges to $U_\theta(f)$ in $\FSL$ as $n \to
\infty$:
\begin{eqnarray*}
				\norm{U_\theta(f)-U_\theta^{(n)}(f)}_2^2
&	=		&	\int_S \left(\int_0^\infty 1\{t \geq n\} \lambda^{-1} e^{\lambda t} P_t^\theta f(x) \lambda e^{-\lambda t} \dd t\right)^2 \mu_\theta(dx) \\
&	\leq		&	\int_S \left(\int_0^\infty 1\{t \geq n\} \lambda^{-2} e^{2\lambda t} \left(P_t^\theta f(x)\right)^2 \lambda e^{-\lambda t} \dd t\right) \mu_\theta(dx) \\
&	=		&	\lambda^{-1} \int_n^\infty e^{\lambda t} \cc || P_t^\theta f ||^2_2 \dd t \\
&	\leq		&	\lambda^{-1} \norm{f}^2_2 \int_n^\infty e^{-\lambda t} \dd t 
= \lambda^{-2} \norm{f}^2_2 e^{-\lambda n} \to 0.
\end{eqnarray*}
Taking $n=0$, we obtain \eqref{eq:U-bound}. Using that $\A_\theta$ is
closed and linear, we conclude that $\A_\theta\left(U_\theta(f)\right)
= \GT\left(U_\theta(f)\right) = -f$; see e.g. Property~P7,
\cite{mmp}. 
\end{proof}

\vspace{1mm}

\begin{proof}[Proof of Proposition \ref{prop:CLT}]
The proof is an application of the central limit theorem for martingales. 
For completeness and because we need to extend the result in a
non-standard way later, we give the proof. First, note that
\begin{eqnarray*}
\frac{1}{\sqrt{n\Delta_n}} \int_0^{n\Delta_n} f(X_s) \dd s
&=& \frac{1}{\sqrt{n\Delta_n}} \sum_{i=1}^n \int_{(i-1)\Delta_n}^{i\Delta_n} f(X_s) \dd s \\
&=& \frac{1}{\sqrt{n\Delta_n}} \sum_{i=1}^n \int_{(i-1)\Delta_n}^{i\Delta_n} \left[f(X_s) - f(X_{t^n_{i-1}})\right] \dd s + \sqrt{n\Delta_n} V_n(f),
\end{eqnarray*}
where we will show that
\begin{equation}\label{eq:CLT1}
\frac{1}{\sqrt{n\Delta_n}} \sum_{i=1}^n \int_{(i-1)\Delta_n}^{i\Delta_n} \left[f(X_s) - f(X_{t^n_{i-1}})\right] \dd s = o_{\Pnull}(1).
\end{equation}

With $A_i:=\int_{(i-1)\Delta_n}^{i\Delta_n} \left[f(X_s) - f(X_{t^n_{i-1}})\right] \dd s$, Fubini's theorem combined with \ref{lemx:generator} implies that
\begin{equation*}
\Enull\left(A_i \given \F^n_{i-1}\right) = \int_0^{\Delta_n} u \cc R(u,X_{t^n_{i-1}};\theta_0) \dd u \leq \Delta_n^2 F(X_{t^n_{i-1}};\theta_0)
\end{equation*}
for a generic function $F(x;\theta_0)$ of polynomial growth in
$x$. Since $n\Delta_n^3 \to 0$, it follows by Lemma \ref{lem:LLN} that
\begin{equation*}
\frac{1}{\sqrt{n\Delta_n}} \sum_{i=1}^n \Enull\left(A_i \given \F^n_{i-1}\right) \leq (n\Delta_n^3)^{1/2} \frac{1}{n} \sum_{i=1}^n F(X_{t^n_{i-1}};\theta_0) \cPnull 0.
\end{equation*}
Moreover, for all $k \geq 1$, Jensen's inequality implies that
\begin{eqnarray*}
|A_i|^k &=& \Delta_n^k \abs{\frac{1}{\Delta_n} \int_{(i-1)\Delta_n}^{i\Delta_n} \left[f(X_s) - f(X_{t^n_{i-1}})\right] \dd s}^k \\
& \leq & \Delta_n^{k-1} \int_{(i-1)\Delta_n}^{i\Delta_n} |f(X_s) - f(X_{t^n_{i-1}})|^k \dd s
\leq \Delta_n^k \sup_{u \in [0,\Delta_n]} |f(X_{t^n_{i-1}+u}) - f(X_{t^n_{i-1}})|^k,
\end{eqnarray*}
and, hence, by Lemma \ref{lemx:XX-bound},
\begin{eqnarray*}
\frac{1}{n\Delta_n} \sum_{i=1}^n \Enull\left(|A_i|^2 \given \F^n_{i-1}\right)
&	\leq		&	\Delta_n \frac{1}{n} \sum_{i=1}^n \Enull\left(\sup_{u \in [0,\Delta_n]} |f(X_{t^n_{i-1}+u}) - f(X_{t^n_{i-1}})|^2 \given \F^n_{i-1}\right) \\
&	=		&	\Delta_n^2 \frac{1}{n} \sum_{i=1}^n R(\Delta_n,X_{t^n_{i-1}};\theta_0) \cPnull 0.
\end{eqnarray*}
The conclusion \eqref{eq:CLT1} now follows from Lemma~9 in \cite{edc-1993}. \\

To apply the central limit theorem for martingales, note that Proposition
\ref{prop:U} and Itô's formula applied to $U_0(f)$ imply that 
\begin{eqnarray*}
\U_0(f)(X_t)
&=& \U_0(f)(X_0) + \int_0^t \Gnull (U_0(f))(X_s) \dd s + \int_0^t \partial_x U_0(f)(X_s)b(X_s;\theta_0) dB_s \\
&=& \U_0(f)(X_0) - \int_0^t f(X_s) \dd s + \int_0^t \partial_x U_0(f)(X_s)b(X_s;\theta_0) dB_s,
\end{eqnarray*}
so
\begin{equation}\label{eq:CLT2}
\frac{1}{\sqrt{n\Delta_n}} \int_0^{n\Delta_n} f(X_s) \dd s = \frac{1}{\sqrt{n\Delta_n}} \int_0^{n\Delta_n} \partial_x U_0(f)(X_s)b(X_s;\theta_0) dB_s + o_{\Pnull}(1).
\end{equation}

The stochastic integral is a true martingale under
$\Pnull$ and by the ergodic theorem 
\begin{equation*}
\frac{1}{n\Delta_n} \int_0^{n\Delta_n} \left[\partial_x U_0(f)(X_s)b(X_s;\theta_0)\right]^2 \dd s \cPnull \mu_0\left([\partial_x U_0(f) b(\ccs;\theta_0)]^2\right).
\end{equation*}

In conclusion,
\begin{eqnarray}
											\sqrt{n\Delta_n} V_n(f)
\label{eqn:CLT.equiv1}	
\label{eqn:CLT.equiv2}	&	=			&	\frac{1}{\sqrt{n\Delta_n}} \int_0^{n\Delta_n} \partial_x U_0(f)(X_s)b(X_s;\theta_0) dB_s + o_{\Pnull}(1) \\
						&	\cDnull		&	\N\left(0,\mu_0\left([\partial_x U_0(f) b(\ccs;\theta_0)]^2\right)\right), \nonumber
\end{eqnarray}
where convergence in law under $\Pnull$ follows from the
continuous-time martingale central limit theorem (e.g.\ Theorem~6.31
in \cite{slt}) or the central limit theorem for martingale arrays
(e.g.\ Theorem 3.2 in \cite{hallheyde}). The conditional Lyapunov
condition can be verified as in the proof of Theorem \ref{thm:PBEF.1}.

The alternative expression for the asymptotic variance $\V_0(f)$ in
\eqref{eq:CLT.AVAR} follows because with $g(x) = U_0(f)$ and $b_0(x) =
b(x;\theta_0)$ it follows
from Proposition \ref{prop:U} that
\[
2 \mu_0(fg) = - \mu_0\left(\Gnull(g^2)\right) +
\mu_0\left(b_0^2 \left[\frac12 (g^2)'' - g g'' \right] \right) =
\mu_0 \left((b_0g')^2 \right), 
\]
where we have used that $\mu_0(\Gnull(g^2)) = 0$, see e.g.\ \cite{mmp},
p. 774. 
\end{proof}

\vspace{1mm}

\begin{proof}[Proof of Theorem \ref{thm:PBEF.s}]
Under the conditions of theorem, the function $\kappa$ is 1-1, and
$\kappa^{-1}$ is continuous. By Lemma \ref{lem:LLN}, $V_n(f) \cPnull
\kappa(\theta_0)$ as $n \to \infty$. We have assumed that $\theta_0
\in {\rm int} \, \Theta$, so $\kappa(\theta_0) \in {\rm int} \, \kappa
(\Theta)$, and hence $\Pnull (V_n(f) \in \kappa (\Theta)) \to 1$ as $n
\to \infty$.

When $V_n(f) \in \kappa (\Theta)$, $\hat \theta_n = \kappa^{-1}
(V_n(f))$ is the unique $G_n$-estimator. When $V_n(f) \notin \kappa
(\Theta)$, we set $\hat \theta_n := \theta^*$ for some $\theta^* \in
\Theta$. Then $\hat \theta_n \cPnull \theta_0$ as $n \to \infty$, and
by a Taylor expansion
\[
\sqrt{n\Delta_n} \left( \hat \theta_n - \theta_0\right) =
\partial_\theta \kappa (\theta_0) \sqrt{n\Delta_n}V_n(f^*) + o_{\Pnull}(1),
\]
so \eqref{eq:AN.s} follows from Proposition \ref{prop:CLT}.
\end{proof}

\vspace{1mm}

\begin{proof}[Proof of Lemma \ref{lem:PBEF.1}]
To simplify the presentation, we define
\begin{equation}\label{eq:H}
H_n(\theta) = \frac{1}{n\Delta_n} \sum_{i=1}^n g(\Delta_n,X_{t^n_i},X_{t^n_{i-1}};\theta)
\end{equation}
where $g=(g_1,g_2)^T$ is given by
\begin{eqnarray}
\label{eqn:g1} g_1(\Delta_n,X_{t^n_i},X_{t^n_{i-1}};\theta) &=&
 f(X_{t^n_i})-\breve{a}_n(\theta)_0-\breve{a}_n(\theta)_1f(X_{t^n_{i-1}}) \\ 
\label{eqn:g2} g_2(\Delta_n,X_{t^n_i},X_{t^n_{i-1}};\theta) &=&
 f(X_{t^n_{i-1}}) g_1(\Delta_n,X_{t^n_i},X_{t^n_{i-1}};\theta). 
\end{eqnarray}

As a first step we verify the expansion \eqref{eq:a} of
$\breve{a}_n(\theta)$ in powers of $\Delta_n$. By Lemma \ref{lemx:generator},
\begin{equation*}
\ET\left(f(X_{\Delta_n}) \given \F_0\right) = f(X_0) + \Delta_n \GT f(X_0) + \Delta_n^2R(\Delta_n,X_0;\theta),
\end{equation*}
which implies that
\[
\ET\left[f(X_0)f(X_{\Delta_n})\right]
= \ET\left[f(X_0)\ET(f(X_{\Delta_n}) \given \F_0)\right] 
= \mu_\theta(f^2) + \Delta_n\mu_\theta(f \GT f) + \Delta_n^2 R(\Delta_n;\theta),
\]
where $\abs{R(\Delta_n;\theta)} \leq C(\theta)$ for a constant $C(\theta)>0$. This yields the $\Delta_n$-expansion
\begin{equation}\label{eq:a1}
\breve{a}_n(\theta)_1 = \frac{\ET\left[f(X_0)f(X_{\Delta_n})\right]-\left[\mu_\theta(f)\right]^2}{\VT f(X_0)} = 1 + \Delta_n K_f(\theta) + \Delta_n^2 R(\Delta_n;\theta),
\end{equation}
and, as a consequence,
\begin{equation}\label{eq:a0}
\breve{a}_n(\theta)_0 = -\Delta_n K_f(\theta)\mu_\theta(f) + \Delta_n^2 R(\Delta_n;\theta).
\end{equation}
This expansion of $\breve{a}_n(\theta)$ together with
\begin{equation*}
\Enull\left(f(X_{t^n_i}) \given \F^n_{i-1}\right) = f(X_{t^n_{i-1}}) + \Delta_n \Gnull f(X_{t^n_{i-1}}) + \Delta_n^2R(\Delta_n,X_{t^n_{i-1}};\theta_0)
\end{equation*}
imply that
\begin{eqnarray}\label{eqn:g1.cond}
&		& \Enull\left[g_1(\Delta_n,X_{t^n_i},X_{t^n_{i-1}};\theta) \given \F^n_{i-1}\right] \nonumber \\
&	=	& \Enull\left(f(X_{t^n_i}) \given \F^n_{i-1}\right) - \breve{a}_n(\theta)_0 - \breve{a}_n(\theta)_1f(X_{t^n_{i-1}}) \nonumber \\
&	=	& \Delta_n \left(\Gnull f(X_{t^n_{i-1}}) + K_f(\theta)\left[\mu_\theta(f) - f(X_{t^n_{i-1}})\right]\right) + \Delta_n^2 R_0(\Delta_n,X_{t^n_{i-1}};\theta).
\end{eqnarray}

Hence, by Lemma \ref{lem:LLN},
\begin{eqnarray*}
&				& \frac{1}{n\Delta_n} \sum_{i=1}^n \Enull\left[g_1(\Delta_n,X_{t^n_i},X_{t^n_{i-1}};\theta) \given \F^n_{i-1}\right] \\
&	=			& \frac{1}{n}\sum_{i=1}^n\Gnull f(X_{t^n_{i-1}})+K_f(\theta)\cc\frac{1}{n}\sum_{i=1}^n\left[\mu_\theta(f)-f(X_{t^n_{i-1}})\right]+\frac{\Delta_n}{n}\sum_{i=1}^nR_0(\Delta_n,X_{t^n_{i-1}};\theta) \\
&	\cPnull	& K_f(\theta) (\mu_\theta-\mu_0)(f),
\end{eqnarray*}
where the contribution from the first term vanishes because
$\mu_0(\Gnull f)=0$; see e.g.\ \cite{mmp}. 

To apply Lemma~9 in \cite{edc-1993}, it remains to show that
\begin{equation}\label{eq:g1}
\frac{1}{n^2\Delta^2_n} \sum_{i=1}^n \Enull\left[g^2_1(\Delta_n,X_{t^n_i},X_{t^n_{i-1}};\theta) \given \F^n_{i-1}\right] = o_{\Pnull}(1).
\end{equation}
From the expansions \eqref{eq:a1} and \eqref{eq:a0}, it follows that
\begin{equation*}
\breve{\pi}_{i-1}(\theta) = \breve{a}_n(\theta)_0 + \breve{a}_n(\theta)_1f(X_{t^n_{i-1}}) =  f(X_{t^n_{i-1}}) + \Delta_n R(\Delta_n,X_{t^n_{i-1}};\theta),
\end{equation*}
which, in turn, yields the decomposition
\begin{eqnarray}\label{eqn:g1.decomp}
\lefteqn{g^2_1(\Delta_n,X_{t^n_i},X_{t^n_{i-1}};\theta) =} \\ &&
\left[f(X_{t^n_i})-f(X_{t^n_{i-1}})\right]^2 + 2
  \left[f(X_{t^n_i})-f(X_{t^n_{i-1}})\right] \Delta_n
  R(\Delta_n,X_{t^n_{i-1}};\theta) + \Delta_n^2
  R(\Delta_n,X_{t^n_{i-1}};\theta). \nonumber
\end{eqnarray}
Lemma \ref{lemx:XX-bound} implies that
\begin{equation*}
\frac{1}{n^2\Delta_n^2} \sum_{i=1}^n \Enull\left[|f(X_{t^n_i})-f(X_{t^n_{i-1}})|^2 \given \F^n_{i-1}\right] = \frac{1}{n\Delta_n}\frac{1}{n}\sum_{i=1}^n R(\Delta_n,X_{t^n_{i-1}};\theta_0) \cPnull 0,
\end{equation*}
where we use that $n\Delta_n \to \infty$. Similarly,
\begin{equation*}
\frac{1}{n^2\Delta_n} \sum_{i=1}^n R(\Delta_n,X_{t^n_{i-1}};\theta)\Enull\left[|f(X_{t^n_i})-f(X_{t^n_{i-1}})| \given \F^n_{i-1}\right] = \frac{\Delta_n^{1/2}}{n\Delta_n}\frac{1}{n}\sum_{i=1}^n R_0(\Delta_n,X_{t^n_{i-1}};\theta) \cPnull 0,
\end{equation*}
and, finally,
\begin{equation*}
\frac{1}{n^2}\sum_{i=1}^n R(\Delta_n,X_{t^n_{i-1}};\theta) \cPnull 0,
\end{equation*}
which together implies \eqref{eq:g1}. Thus, by Lemma~9 in \cite{edc-1993},
\begin{equation*}
\frac{1}{n\Delta_n} \sum_{i=1}^n g_1(\Delta_n,X_{t^n_i},X_{t^n_{i-1}};\theta) \cPnull K_f(\theta) (\mu_\theta-\mu_0)(f).
\end{equation*}

Similarly for $g_2(\Delta_n,X_{t^n_i},X_{t^n_{i-1}};\theta)$, it follows easily from \eqref{eqn:g1.cond} that
\begin{eqnarray*}
&		& \Enull\left[g_2(\Delta_n,X_{t^n_i},X_{t^n_{i-1}};\theta) \given \F^n_{i-1}\right] \\
&	=	& \Delta_n \left(f(X_{t^n_{i-1}}) \Gnull f(X_{t^n_{i-1}}) - K_f(\theta)f(X_{t^n_{i-1}})\left[f(X_{t^n_{i-1}}) - \mu_\theta(f)\right]\right) + \Delta_n^2 R_0(\Delta_n,X_{t^n_{i-1}};\theta),
\end{eqnarray*}
and, hence,
\begin{equation*}
\frac{1}{n\Delta_n} \sum_{i=1}^n \Enull\left[g_2(\Delta_n,X_{t^n_i},X_{t^n_{i-1}};\theta) \given \F^n_{i-1}\right] \cPnull \mu_0(f \Gnull f) - K_f(\theta)\left[\mu_0(f^2) - \mu_0(f)\mu_\theta(f)\right].
\end{equation*}
Moreover, since $g^2_2(\Delta_n,X_{t^n_i},X_{t^n_{i-1}};\theta) = f^2(X_{t^n_{i-1}})g^2_1(\Delta_n,X_{t^n_i},X_{t^n_{i-1}};\theta)$, we easily see that
\begin{equation*}
\frac{1}{n^2\Delta^2_n} \sum_{i=1}^n \Enull\left[g^2_2(\Delta_n,X_{t^n_i},X_{t^n_{i-1}};\theta) \given \F^n_{i-1}\right] = o_{\Pnull}(1),
\end{equation*}
so the first conclusion of the lemma follows from Lemma~9 in \cite{edc-1993}.

To establish the limit of $\partial_{\theta^T} H_n(\theta)$, we write
\begin{equation*}
H_n(\theta) = \frac{1}{n\Delta_n}\sum_{i=1}^n Z_{i-1}\left[f(X_{t^n_i})-Z_{i-1}^T \breve{a}_n(\theta)\right],
\end{equation*}
which implies
\begin{equation*}
\partial_{\theta^T} H_n(\theta) = - \frac{1}{n\Delta_n}\sum_{i=1}^n Z_{i-1}Z_{i-1}^T \partial_{\theta^T} \breve{a}_n(\theta) = Z_n(f)A_n(\theta),
\end{equation*}
where $Z_n(f) := \frac{1}{n} \sum_{i=1}^n Z_{i-1}Z_{i-1}^T$ and
$A_n(\theta) := -\Delta_n^{-1} \partial_{\theta^T}
\breve{a}_n(\theta)$. By Lemma \ref{lem:LLN},
\begin{equation*}
Z_n(f) \cPnull Z(f) = \mtwo{1}{\mu_0(f)}{\mu_0(f)}{\mu_0(f^2)}
\end{equation*}
and applying the expansion \eqref{eq:a},
\begin{equation*}
A_n(\theta) = \partial_{\theta^T} \vtwo{K_f(\theta) \mu_\theta(f)}{-K_f(\theta)} + \Delta_n \partial_{\theta^T}R(\Delta_n;\theta) \to \partial_{\theta^T} \vtwo{K_f(\theta) \mu_\theta(f)}{-K_f(\theta)} =: A(\theta),
\end{equation*}
which holds under the regularity assumption \eqref{eq:dR}. Collecting our observations,
\begin{equation*}
\partial_{\theta^T} H_n(\theta) \cPnull Z(f)A(\theta) =
\mtwo{1}{\mu_0(f)}{\mu_0(f)}{\mu_0(f^2)}
\mtwo{\partial_{\theta_1}\left[K_f(\theta)\mu_\theta(f)\right]}{\partial_{\theta_2}\left[K_f(\theta)\mu_\theta(f)\right]}{-\partial_{\theta_1} K_f(\theta)}{-\partial_{\theta_2} K_f(\theta)}.
\end{equation*}

To argue that the convergence is uniform over a compact subset
$\M \subseteq \Theta$, note that
\begin{eqnarray*}
\norm{\partial_{\theta^T} H_n(\theta) - Z(f)A(\theta)}
\leq	 \norm{Z_n(f)[A_n(\theta)-A(\theta)]} + \norm{[Z_n(f)-Z(f)]A(\theta)}
\end{eqnarray*}
and, in particular,
\begin{equation*}
\sup_{\theta \in \M}\norm{\partial_{\theta^T} H_n(\theta) - Z(f)A(\theta)} \leq \norm{Z_n(f)} \sup_{\theta \in \M}\norm{A_n(\theta)-A(\theta)} + \norm{Z_n(f)-Z(f)} \sup_{\theta \in \M}\norm{A(\theta)}.
\end{equation*}
By continuity of norms, $\norm{Z_n(f)} \cPnull \norm{Z(f)}$ and
$\norm{Z_n(f)-Z(f)} = o_{\Pnull}(1)$, so \eqref{eq:W} follows by
observing that 
\begin{equation*}
\sup_{\theta \in \M}\norm{A_n(\theta)-A(\theta)} = \Delta_n
\sup_{\theta \in \M} \norm{\partial_{\theta^T} R(\Delta_n;\theta)}
\leq C(\M) \Delta_n \to 0
\end{equation*}
and using the continuity of $\theta \mapsto A(\theta)$.
\end{proof}

\vspace{1mm}

\begin{proof}[Proof of Theorem \ref{thm:PBEF.1}]
We continue with the notation \eqref{eq:H}-\eqref{eqn:g2} introduced
above. Existence of a consistent sequence of $G_n$-estimators
$(\hat{\theta}_n)$ follows from Theorem 2.5 in \cite{jacso}, because
the conclusions of Lemma \ref{lem:PBEF.1} and the assumption that $W(\theta_0)$ is
non-singular imply Condition 2.2 in \cite{jacso}. The uniqueness
result follows from Theo\-rem 2.7 in \cite{jacso} under the
identifiability condition $\gamma(\theta_0;\theta) \neq 0$ for $\theta
\neq \theta_0$. The function $\theta \mapsto \gamma(\theta_0;\theta)$
is called $G(\theta)$ in \cite{jacso} and is necessarily continuous.

\vspace{2mm}

Asymptotic normality when $n\Delta_n^3 \to 0$ follows from Theorem
2.11 in \cite{jacso}. We only need to check that
\begin{equation}
\label{eq:nyclt}
\sqrt{n\Delta_n} H_n(\theta_0) \cDnull \N_2(0,\V_0(f)).
\end{equation}

We apply the Cram{\'e}r-Wold device to prove this weak convergence
result, i.e.\ we must prove that for all $c_1,c_2 \in \R$
\begin{eqnarray}
\label{eq:CW}
C_n &=&  c_1 \cc \frac{1}{\sqrt{n\Delta_n}} \sum_{i=1}^n
        g_1(\Delta_n,X_{t^n_i},X_{t^n_{i-1}};\theta_0) + c_2 \cc
        \frac{1}{\sqrt{n\Delta_n}} \sum_{i=1}^n
        g_2(\Delta_n,X_{t^n_i},X_{t^n_{i-1}};\theta_0) \\ 
&& \hspace{40mm} \cDnull		 \N\left(0,\mu_0\left(\left[\partial_x
   \U_0(c_1f^*_1+c_2f^*_2) + c_2 f f' \right]^2
   b^2(\ccs;\theta_0)\right)\right) \nonumber
\end{eqnarray}

Reusing the expansions \eqref{eq:a1} and \eqref{eq:a0}, we find that
\begin{equation*}
g_1(\Delta_n,X_{t^n_i},X_{t^n_{i-1}};\theta_0) =  
f(X_{t^n_i}) - f(X_{t^n_{i-1}}) + \Delta_n f^*_1(X_{t^n_{i-1}}) + \Delta_n^2 R(\Delta_n,X_{t^n_{i-1}};\theta_0)
\end{equation*}
where $f^*_1$ is defined in Condition \ref{cond:G.1}. Hence,
\begin{eqnarray*}
\lefteqn{\frac{1}{\sqrt{n\Delta_n}} \sum_{i=1}^n
  g_1(\Delta_n,X_{t^n_i},X_{t^n_{i-1}};\theta_0)} \\ & =& 
\frac{1}{\sqrt{n\Delta_n}} \sum_{i=1}^n \left[f(X_{t^n_i}) -
  f(X_{t^n_{i-1}})\right] + \sqrt{n\Delta_n} \cc V_n(f^*_1) +
  (n\Delta_n^3)^{1/2} \cc \frac{1}{n} \sum_{i=1}^n
  R(\Delta_n,X_{t^n_{i-1}};\theta_0) \\
&=&	\sqrt{n\Delta_n} \cc V_n(f^*_1) + o_{\Pnull}(1),
\end{eqnarray*}
because the first term in the expansion is a telescoping sum. Note
that asymptotic normality for the first coordinate of the estimating
function follows from Proposition \ref{prop:CLT}. However, to obtain joint
weak convegence, we need to consider the second coordinate too, which 
requires more work.

By Itô's formula,
\begin{equation*}
f(X_{t^n_i}) - f(X_{t^n_{i-1}}) = \Delta_n \Gnull f(X_{t^n_{i-1}}) + A_i(\theta_0) + M_i(\theta_0),
\end{equation*}
where
\begin{eqnarray*}
A_i(\theta)	&=& \int_{(i-1)\Delta_n}^{i\Delta_n} \left[\GT f(X_s) - \GT f(X_{t^n_{i-1}})\right] \dd s, \\
M_i(\theta)	&=& \int_{(i-1)\Delta_n}^{i\Delta_n} f'(X_s)b(X_s;\theta) dB_s,
\end{eqnarray*}
and, hence, by applying the expansions \eqref{eq:a1} and \eqref{eq:a0} as above,
\begin{eqnarray*}
\lefteqn{g_2(\Delta_n,X_{t^n_i},X_{t^n_{i-1}};\theta_0) =} \\ &&
f(X_{t^n_{i-1}}) A_i(\theta_0) + \Delta_n f^*_2(X_{t^n_{i-1}}) + f(X_{t^n_{i-1}}) M_i(\theta_0) + \Delta_n^2 R(\Delta_n,X_{t^n_{i-1}};\theta_0).
\end{eqnarray*}

A straightforward extension of the proof of \eqref{eq:CLT1} implies that
\begin{equation*}
\frac{1}{\sqrt{n\Delta_n}} \sum_{i=1}^n f(X_{t^n_{i-1}}) A_i(\theta_0) = o_{\Pnull}(1)
\end{equation*}
since $n\Delta_n^3 \to 0$ and, as a consequence,
\begin{equation*}
C_n =  \sqrt{n\Delta_n} V_n(f^*) + \frac{1}{\sqrt{n\Delta_n}} \sum_{i=1}^n f(X_{t^n_{i-1}}) M^n_i(\theta_0) + o_{\Pnull}(1),
\end{equation*}
where $f^* = c_1 f_1^* + c_2 f_2^*$.

To gather the non-negligible terms, we argue as in \eqref{eqn:CLT.equiv2} that
\[
\sqrt{n\Delta_n} V_n(f^*) 
= \frac{1}{\sqrt{n\Delta_n}} \sum_{i=1}^n
\int_{(i-1)\Delta_n}^{i\Delta_n} \partial_x
\U_0(f^*)(X_s)b(X_s;\theta_0)dB_s + o_{\Pnull}(1), 
\]
which, in turn, yields the stochastic integral representation
\[
C_n = \frac{1}{\sqrt{n\Delta_n}} \sum_{i=1}^n
\int_{(i-1)\Delta_n}^{i\Delta_n} \left[\partial_x \U_0(f^*)(X_s) +
  f(X_{t^n_{i-1}}) f'(X_s)\right] b(X_s;\theta_0) dB_s +
o_{\Pnull}(1). 
\]

At this point, we can apply the central limit theorem for martingale
difference arrays; see e.g.\ \cite{hallheyde} or \cite{slt}. To
shorten notation in the following, we define 
\begin{equation*}
Z_i := \int_{(i-1)\Delta_n}^{i\Delta_n} \left[\partial_x
  \U_0(f^*)(X_s) + f(X_{t^n_{i-1}}) f'(X_s)\right] b(X_s;\theta_0) dB_s,
\end{equation*}
and
\begin{equation*}
h(x) = \left[\partial_x \U_0(f^*)(x) + f(x) f'(x)\right]^2 b^2(x;\theta_0).
\end{equation*}

First, by the conditional Itô isometry, Tonelli's theorem and Lemma \ref{lemx:generator},
\begin{eqnarray*}
\lefteqn{\frac{1}{n\Delta_n}\sum_{i=1}^n \Enull\left((Z_i)^2 \given \F^n_{i-1}\right)} \\
&	=			& \frac{1}{n\Delta_n}\sum_{i=1}^n
                                  \Enull\left(\int_{(i-1)\Delta_n}^{i\Delta_n}
                                  \left[\partial_x \U_0(f^*)(X_s) +
                                  f(X_{t^n_{i-1}}) f'(X_s)\right]^2
                                  b^2(X_s;\theta_0) \dd s \given
                                  \F^n_{i-1}\right) \\ 
&	=			& \frac{1}{n\Delta_n}\sum_{i=1}^n
                                  \int_{(i-1)\Delta_n}^{i\Delta_n}
                                  \Enull\left(\left[\partial_x
                                  \U_0(f^*)(X_s) + f(X_{t^n_{i-1}})
                                  f'(X_s)\right]^2 b^2(X_s;\theta_0)
                                  \given \F^n_{i-1}\right) \dd s \\ 
&	=			& \frac{1}{n\Delta_n}\sum_{i=1}^n \int_0^{\Delta_n} \left[h(X_{t^n_{i-1}}) + u \cc R(u,X_{t^n_{i-1}};\theta_0)\right] \dd u \\
&	=			& \frac{1}{n} \sum_{i=1}^n h(X_{t^n_{i-1}}) + o_{\Pnull}(1) \\
&	\cPnull	& \mu_0\left(\left[\partial_x \U_0(f^*) + f f' \right]^2 b^2(\ccs;\theta_0)\right).
\end{eqnarray*}

Moreover, for any $g \in \C^2_p(S)$ and $k \geq 2$, the
Burkholder-Davis-Gundy inequality, Jensen's inequality, Tonelli's
theorem and Lemma \ref{lemx:generator}, respectively, imply that
\begin{eqnarray*}
					\Enull \left(\abs{\int_{(i-1)\Delta_n}^{i\Delta_n} g(X_s) dB_s}^k \given \F^n_{i-1}\right)
&	\leq		&	\Enull \left(\left(\int_{(i-1)\Delta_n}^{i\Delta_n} g^2(X_s) \dd s\right)^{k/2} \given \F^n_{i-1}\right) \\
&	\leq		&	\Delta_n^{k/2-1} \cc \Enull \left(\int_{(i-1)\Delta_n}^{i\Delta_n} |g(X_s)|^k \dd s \given \F^n_{i-1}\right) \\
&	=			&	\Delta_n^{k/2-1} \cc \int_{(i-1)\Delta_n}^{i\Delta_n} \Enull\left(|g(X_s)|^k \given \F^n_{i-1}\right) \dd s  \\
&	=			&	\Delta_n^{k/2-1} \cc \int_{0}^{\Delta_n} \left(|g(X_{t^n_{i-1}})|^k + u \cc R(u,X_{t^n_{i-1}};\theta_0)\right) \dd u  \\
&	\leq	&	\Delta_n^{k/2} |g(X_{t^n_{i-1}})|^k + \Delta_n^{k/2+1} F(X_{t^n_{i-1}};\theta_0),
\end{eqnarray*}
so based on the inequality
\begin{equation*}
\abs{Z_i}^3 \leq_C \abs{\int_{(i-1)\Delta_n}^{i\Delta_n} \partial_x \U_0(f^*)(X_s)b(X_s;\theta_0) dB_s}^3 + |f(X_{t^n_{i-1}})|^3\abs{\int_{(i-1)\Delta_n}^{i\Delta_n} f'(X_s)b(X_s;\theta_0) dB_s}^3,
\end{equation*}
we conclude that
\begin{eqnarray*}
\lefteqn{\frac{1}{(n\Delta_n)^{3/2}} \sum_{i=1}^n \Enull\left(|Z_i|^3
  \given \F^n_{i-1}\right) \leq_C} \\ 
&& \frac{1}{\sqrt{n}} \frac{1}{n} \sum_{i=1}^n \left[|\partial_x
   \U_0(f^*)(X_{t^n_{i-1}})|^3 + |f(X_{t^n_{i-1}}) f'(X_{t^n_{i-1}})|^3\right]|b(X_{t^n_{i-1}};\theta_0)|^3 \\ 
&& \hspace{25mm} \mbox{} + \frac{\Delta_n}{\sqrt{n}} \frac{1}{n} \sum_{i=1}^n
   F(X_{t^n_{i-1}};\theta_0) \cPnull  0.
\end{eqnarray*}

Now the martingale central limit theorem for triangular arrays implies
\eqref{eq:CW}, so \eqref{eq:nyclt} follows by the Cram{\'e}r-Wold
device.
The alternative expressions for the matrix $\V_0(f)$ follows because
by Proposition \ref{prop:CLT} $\mu_0\left([\partial_x U_0(g)
  b(\ccs;\theta_0)]^2\right) = 2\mu_0\left(g \U_0 (g)\right)$ for $g \in
\mathscr{H}^2_0$, and because with $g_i(x) = U_0(f_i^*)$ and
$b_0(x) = b(x;\theta_0)$ it follows from Proposition \ref{prop:U} that
\[
\mu_0(f_1^*g_2 + f_2^*g_1) = - \mu_0\left(\Gnull(g_1g_2)\right) +
\mu_0\left(b_0^2 g_1' g_2' \right) = \mu_0 \left( b_0^2g_1'g_2' \right), 
\]
where we have used that $\mu_0(\Gnull(g_1g_2)) = 0$, see e.g.\ \cite{mmp},
p. 774. 
 
\end{proof}

\vspace{1mm}

\begin{proof}[Proof of Proposition \ref{prop:AVAR}]
By the Cauchy-Schwarz inequality and the inequality \eqref{eq:U-bound}
\begin{equation*}
\abs{\mu_0\left(f^* \U_0(f^*)\right)} \leq \norm{f^*}_2
\norm{\U_0(f^*)}_2 \leq \frac{\norm{f^*}_2^2}{\lambda_0}, 
\end{equation*}
where $\lambda_0>0$ denotes the spectral gap of $(X_t)$ under
$\Pnull$. Hence,
\begin{equation*}
\AVT = \frac{2\, \mu_0\left(f^*
    \U_0(f^*)\right)}{[\partial_{\theta}\mu_{\theta_0}(f)]^2} \leq
\frac{2 \, \Vnull f(X_0)}{\lambda_0 \, [\partial_{\theta}\mu_{\theta_0}(f)]^2}. 
\end{equation*}
\end{proof}

	\section*{Appendix B: Moment expansions}\label{sec:app.expansions}

The proofs in Appendix~A rely on conditional moment expansions for
diffusion models and the following results are essentially taken from
\cite{edc-2000} and \cite{asd}, respectively. In the sequel, $\theta
\in \Theta$ is arbitrary and we assume for convenience that
$0<\Delta<1$. 

\begin{lemxB}\label{lemx:XX-bound}
Let $f \in \C^1_p(S)$. For any $k \geq 1$, there exists a constant
$C_{k,\theta}>0$ such that
\begin{equation*}
\ET\left(\sup_{s \in [0,\Delta]}|f(X_{t+s})-f(X_t)|^k \given
  \F_t\right) \leq C_{k,\theta} \Delta^{k/2} \left(1+|X_t|\right)^{C_{k,\theta}}.
\end{equation*}
\end{lemxB}

For completeness, we give a rough proof of the following theorem.

\begin{lemxB}\label{lemx:generator}
Suppose that $a(x;\theta) \in \C_p^{2k,0}(S \times \Theta)$, $b(x;\theta) \in \C_p^{2k,0}(S \times \Theta)$ and $f \in \C^{2(k+1)}_p(S)$ for some $k \geq 0$. Then,
\begin{equation*}
\ET\left(f(X_{t+\Delta}) \given \F_t\right) = \sum_{i=0}^k \frac{\Delta^i}{i!} \GT^i f(X_t) +  \Delta^{k+1}R(\Delta,X_t;\theta).
\end{equation*}
\end{lemxB}

\begin{proof}
We only consider $k=0$, the general case may be shown by induction; see Lemma~1.10, \cite{sm}. By Itô's formula,
\begin{equation*}
f(X_{t+\Delta}) = f(X_t) + \int_t^{t+\Delta} \GT f(X_s) \dd s + \int_t^{t+\Delta} \partial_x f(X_s)b(X_s;\theta)dB_s,
\end{equation*}
and since $\partial_x f$ and $b(\ccs;\theta)$ are of polynomial, respectively linear, growth in $x$, the stochastic integral is a true $(\F_t)$-martingale w.r.t. $\PT$ and
\begin{equation*}
\ET\left(f(X_{t+\Delta}) \given \F_t\right)= f(X_t) + \int_0^{\Delta} \ET\left(\GT f(X_{t+u}) \given \F_t\right) \dd u.
\end{equation*}
Moreover, since $\GT f$ is of polynomial growth in $x$,
\begin{equation*}
|\GT f(X_{t+u})| \leq_C 1 + |X_t|^C + |X_{t+u}-X_t|^C
\end{equation*}
and, hence,
\begin{equation*}
\Delta^{-1} \int_0^{\Delta} \ET\left(\GT f(X_{t+u}) \given \F_t\right) \dd u = R(\Delta,X_t;\theta),
\end{equation*}
by a simple application of Lemma \ref{lemx:XX-bound}.
\end{proof}

\end{document}